\theoremstyle{theorem}
\newtheorem{corollary}{Corollary}
\newtheorem{lemma}[corollary]{Lemma}
\newtheorem{theorem}[corollary]{Theorem}
\newtheorem*{theorem*}{Theorem}
\begin{document}

\AtEndDocument{%
  \par
  \medskip
  \begin{tabular}{@{}l@{}}%
    \textsc{Gabriel Coutinho}\\
    \textsc{Dept. of Computer Science} \\ 
    \textsc{Universidade Federal de Minas Gerais, Brazil} \\
    \textit{E-mail address}: \texttt{gabriel@dcc.ufmg.br} \\ \ \\
    \textsc{Emanuel Juliano} \\
    \textsc{Dept. of Computer Science} \\ 
    \textsc{Universidade Federal de Minas Gerais, Brazil} \\
    \textit{E-mail address}: \texttt{emanuelsilva@dcc.ufmg.br}\\ \ \\
    \textsc{Thomás Jung Spier} \\
    \textsc{Dept. of Computer Science} \\ 
    \textsc{Universidade Federal de Minas Gerais, Brazil} \\
    \textit{E-mail address}: \texttt{thomasjung@dcc.ufmg.br}
  \end{tabular}}

\title{The spectrum of symmetric decorated paths}
\author{Gabriel Coutinho\footnote{gabriel@dcc.ufmg.br --- remaining affiliations in the end of the manuscript.} \and Emanuel Juliano \and Thomás Jung Spier}
\date{\today}
\maketitle
\vspace{-0.8cm}

\begin{abstract} 
    The main result of this paper states that in a rooted product of a path with rooted graphs which are disposed in a somewhat mirror-symmetric fashion, there are distinct eigenvalues supported in the end vertices of the path which are too close to each other: their difference is smaller than the square root of two in the even distance case, and smaller than one in the odd distance case. As a first application, we show that these end vertices cannot be involved in a quantum walk phenomenon known as perfect state transfer, significantly strengthening a recent result by two of the authors along with Godsil and van Bommel. For a second application, we show that there is no balanced integral tree of odd diameter bigger than three, answering a question raised by H\'{i}c and Nedela in 1998. 
    
    Our main technique involves manipulating ratios of characteristic polynomials of graphs and subgraphs into continued fractions, and exploring in detail their analytic properties. We will also make use of a result due to P\'{o}lya and Szeg\"{o} about functions that preserve the Lebesgue measure, which as far as we know is a novel application to combinatorics. In the end, we connect our machinery to a recently introduced algorithm to locate eigenvalues of trees, and with our approach we show that any graph which contains two vertices separated by a unique path that is the subdivision of a bridge with at least six inner vertices cannot be integral. As a minor corollary this implies that most trees are not integral, but we believe no one thought otherwise.
\end{abstract}

\begin{center}
\textbf{Keywords}
rooted product ; perfect state transfer ; integral trees
\end{center}
\begin{center}
\textbf{MSC}
05C50 ; 15A42 ; 81P45
\end{center}


\section{Introduction}\label{intro}

A rooted product of a graph $H$ on $n$ vertices with rooted graphs $G_1$,...,$G_n$ is obtained upon identifying the root of each $G_i$ with the $i$th vertex of $H$. This follows the definition introduced by Godsil and Mckay \cite{GodsilMcKayRooted} which succeeded a simpler case studied by Schwenk \cite{SchwenkCharPol}. Trees are examples of graphs that can be very appropriately decomposed as rooted products. In this paper, we focus on a slightly more general case: our graph $H$ will be a path on $n$ vertices but the graphs $G_1$,...,$G_n$ are arbitrary.

\begin{figure}[H]
\begin{center}
		\includegraphics{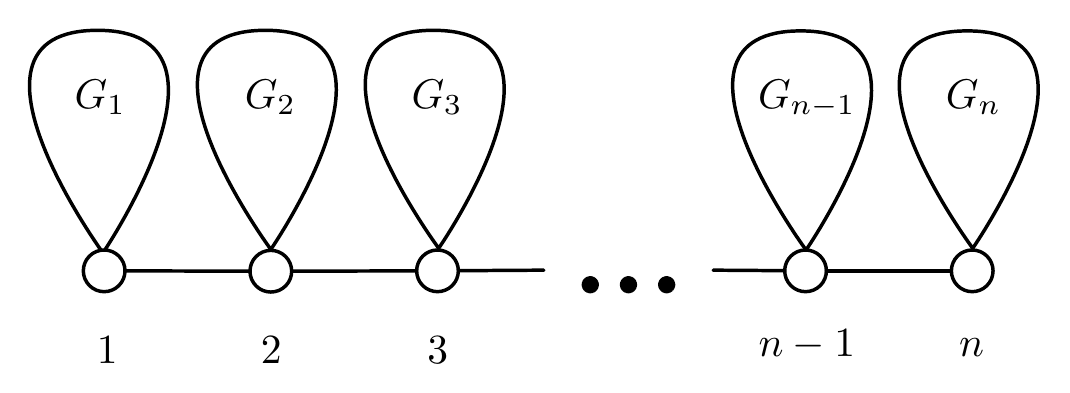}
		\caption{A decorated path $G$: the rooted product of a path $P_n$ with graphs $G_1$,...,$G_n$.}
\end{center} \label{fig:rooted}
\end{figure}

Please keep this figure in mind because we will refer back to it several times in the text. We use the following notation: a graph $G$ has a characteristic polynomial $\phi^{G}$ (all polynomials and rational functions in this text are on the variable $x$). If $i$ is a vertex in $G$, then we define the rational function
\begin{equation}
\alpha_i^G =\dfrac{\phi^G}{\phi^{G\setminus i}}.
\label{eq:alpha}
\end{equation}

It is well-known (see Section~\ref{sec:prelim} below) that $\alpha_i^G(\theta) = 0$ if and only if $\theta$ is an eigenvalue of $G$ for which there is an eigenvector with a non-zero entry at $i$.

Our main result is the following.
\begin{theorem} \label{thm:main}
	Let $G$ be a rooted graph as in Figure~\ref{fig:rooted}, and assume $G$ is neither $P_2$ nor $P_3$. Assume the following two conditions hold.
	\begin{enumerate}[(1)]
		\item For all $i \in \{1,...,n\}$,
	\[
		\alpha^{G_i}_i = \alpha^{G_{n+1-i}}_{n+1-i} \ ;
	\]
	\item For all $\theta$ so that $\alpha_1^G(\theta) = 0$, and for all vertices $i$, $\theta$ is not a pole of $\alpha_i^{G_i}$.
	\end{enumerate}
	
	\noindent
	Then there are two distinct eigenvalues $\lambda$ and $\mu$ of $A(G)$, both of which contain an eigenvector with a non-zero entry at vertex $1$, and so that $|\lambda - \mu| < \sqrt{2}$ if $n$ is odd, and $|\lambda - \mu| \leq 1$ if $n$ is even.
\end{theorem}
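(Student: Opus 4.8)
The plan is to reduce everything to the single rational function $F:=\alpha_1^G$, whose zeros in lowest terms are, by the characterization of $\alpha_i^G$ recalled above, exactly the eigenvalues of $A(G)$ admitting an eigenvector nonzero at vertex $1$; we must exhibit two of them that are closer than $\sqrt2$ (for $n$ odd) or than $1$ (for $n$ even). Write $f_i:=\alpha_i^{G_i}$. Applying repeatedly the elementary identity $\alpha_u^{A}=\alpha_u^{A'}-1/\alpha_v^{B}$, valid whenever $A$ is obtained from graphs $A'\ni u$ and $B\ni v$ by adding the single edge $uv$, one writes
\[
F \;=\; f_1-\cfrac{1}{f_2-\cfrac{1}{\ddots-\cfrac{1}{f_n}}},
\]
a continued fraction which by hypothesis (1) is palindromic in the $f_i$, and the idea is to fold it about its centre.

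\textbf{Even case $n=2m$ (fold at the middle edge).} Let $p_k/q_k$ be the $k$-th convergent, so $p_k=f_kp_{k-1}-p_{k-2}$ and likewise for $q_k$. The standard $2\times2$ transfer-matrix bookkeeping together with $f_{m+j}=f_{m+1-j}$ collapses the palindromic product and yields
\[
F \;=\; \frac{p_m^2-p_{m-1}^2}{p_mq_m-p_{m-1}q_{m-1}}
\;=\;\frac{(p_m-p_{m-1})(p_m+p_{m-1})}{p_mq_m-p_{m-1}q_{m-1}}.
\]
Since $p_m/p_{m-1}$ telescopes to $\psi:=\alpha_m^{A_m}$, where $A_m$ is the decorated path carried by the first $m$ vertices, and since $p_{m-1}$ cannot vanish at a zero of $F$ (else every $p_j$ would vanish there, contradicting $p_0=1$), the eigenvalues of $G$ supported at vertex $1$ are precisely the $\theta$ with $\psi(\theta)\in\{-1,+1\}$ --- the value $+1$ coming from the eigenvectors symmetric about the middle edge, $-1$ from the antisymmetric ones --- this last ``precisely'' being where hypothesis (2) is used. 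For $n=2m+1$ odd the argument is the same in spirit but heavier: folding about the central vertex $m+1$ factors $\phi^G=\phi^{A_m}\cdot\phi^{\widehat A_{m+1}}$, where $\widehat A_{m+1}$ is $A_m$ with $G_{m+1}$ attached to its end by an edge of weight $\sqrt2$, and the eigenvalues supported at $1$ split into the zeros of $\alpha_1^{A_m}$ (antisymmetric part) and the zeros of $\alpha_1^{\widehat A_{m+1}}$ (symmetric part); it is this weight $\sqrt2$ that will promote the relevant level from $1$ to $\sqrt2$.

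\textbf{The Pólya--Szegő input and the pigeonhole.} The function $\psi=\alpha_m^{A_m}=\phi^{A_m}/\phi^{A_m\setminus m}$ has the shape $\psi(x)=x-\sum_j \dfrac{a_j}{x-\nu_j}$ with all $a_j>0$: indeed $1/\psi$ is the $(m,m)$ diagonal entry of the resolvent of $A(A_m)$, hence a sum of simple poles with nonnegative residues, which forces the stated partial-fraction form for $\psi$. For functions of this shape, the theorem of Pólya and Szegő asserts that $\psi$ preserves Lebesgue measure: $|\psi^{-1}(E)|=|E|$ for every Borel $E$. Hence $|\psi^{-1}([-1,1])|=2$. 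On each of the $k+1$ maximal intervals between consecutive poles (with $k$ the number of poles), $\psi$ increases from $-\infty$ to $+\infty$, so $\psi^{-1}([-1,1])$ is a disjoint union of $k+1$ closed intervals whose $2(k+1)$ endpoints are exactly the solutions of $\psi=\pm1$, i.e.\ the eigenvalues supported at vertex $1$. Since $G\neq P_2$ forces $\psi$ to have at least one pole, $k+1\geq2$, so the shortest of these intervals has length at most $2/(k+1)\leq1$, and its two (necessarily distinct) endpoints are the desired pair of eigenvalues. In the odd case one runs the same argument on the Pólya--Szegő function attached to the $\sqrt2$-weighted fold, splitting a preimage of total measure $2\sqrt2$ into $k+1$ intervals; here $G\neq P_3$ forces $k+1\geq3$ (the degenerate value $k+1=2$ being attained only by $P_3$), which yields the strict bound $<2\sqrt2/3<\sqrt2$.

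\textbf{Where (2) enters, and the main obstacle.} Hypothesis (2) is spent on ruling out cancellations in $F=\phi^G/\phi^{G\setminus 1}$ at the points where $\psi=\pm1$, that is, on showing that each such point genuinely is an eigenvalue supported at vertex $1$ and is not killed by a common factor. A cancellation at $\theta$ would force, after unwinding the continued fraction, the identity $\alpha_{m-1}^{G^{(1,\dots,m-1)}}(\theta)=\alpha_{m-1}^{G^{(2,\dots,m-1)}}(\theta)$ between the resolvents at vertex $m-1$ of two decorated paths differing only by vertex $1$; but these two continued fractions differ only in their innermost entry ($f_2-1/f_1$ versus $f_2$), so they agree at $\theta$ only when $f_1=\alpha_1^{G_1}$ has a pole at $\theta$, which (2) forbids. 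I expect this bookkeeping --- tracking multiplicities among $\phi^G$, $\phi^{A_m}$, $\phi^{\widehat A_{m+1}}$ and their vertex-deleted subgraphs while the decorations are only assumed $\alpha$-equivalent rather than isomorphic (so the relevant polynomials drag along harmless common factors), together with the full details of the odd case --- to be the main obstacle; by contrast the folding identities and the Pólya--Szegő/Boole step are comparatively clean.
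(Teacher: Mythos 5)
Your even-case skeleton follows the paper's route (fold at the middle, then Pólya--Szeg\H{o} measure preservation applied to the level set $\{-1\le\psi\le 1\}$), but it rests on a claim that is not justified and is false in general: that the eigenvalues of $G$ supported at vertex $1$ are \emph{precisely} the solutions of $\psi(\theta)=\pm1$, where $\psi=\alpha_k^{A_k}$ is taken at the \emph{folded} vertex. A solution of $\psi(\theta)=\pm1$ only says $\theta$ is in the support of the central vertex $k$ of the folded graph $G^{\mp}$; to conclude it lies in $\Phi_{1n}^{\mp}(G)$ you must transfer the support from $k$ to vertex $1$, and this transfer fails when some decoration $\alpha_i^{G_i}$ has a pole at $\theta$ (an eigenvector can then be nonzero at $k$ but vanish at $1$; e.g.\ $\alpha_1^{G_1}(\theta)=\infty$ with $\alpha_2^{G_2}(\theta)=\pm1$ already produces such a point). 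You cannot invoke hypothesis (2) to exclude this, because (2) only constrains $\theta$ with $\alpha_1^G(\theta)=0$ --- i.e.\ it presupposes what you are trying to establish; your closing paragraph about cancellations addresses the forward direction (strong cospectrality), not this backward one. Since your pigeonhole picks the \emph{shortest} component of $\psi^{-1}([-1,1])$, which may be an interior component where such poles lurk, the two endpoints you produce need not be eigenvalues of $G$ supported at $1$ at all. The paper circumvents exactly this by using only the largest and smallest zeros, where Lemma~\ref{lem:short} guarantees no decoration poles so Lemma~\ref{lem:rootedtopath} applies, and then bounding the \emph{sum} of the two extreme gaps by $\mu((-1,1))=2$, which still yields one gap $\le 1$.

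The odd case is essentially only asserted, and the ``same argument'' does not transfer. In the odd fold, $\Phi^-$ consists (up to the same support-transfer caveat) of the zeros of $\beta=\alpha_k^{G^-}$, while $\Phi^+$ consists of the zeros of $\alpha_{k+1}^{G_{k+1}}-2/\beta$; these two sets are \emph{not} the $\pm\sqrt2$ level sets of a single function of Pólya--Szeg\H{o} form (relative to the natural candidate $\alpha_{k+1}^{G^+}$, the set $\Phi^-$ sits at its \emph{poles}, not at a finite level, and a measure-preservation argument cannot reach poles since that would require a set $E$ of infinite measure). Already for $G=P_5$ the sign pattern of $\Phi^+\cup\Phi^-$ on the real line is incompatible with being two exact level sets of one branchwise-increasing function. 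This is why the paper proves a separate analytic lemma (Lemma~\ref{lem:main_lemma}, via $f=\alpha\beta-\lambda$ with $\lambda=2$) for the odd case; note also that your claimed bound $2\sqrt2/3$ would be strictly stronger than the theorem's $\sqrt2$, which should itself have been a warning sign. So both halves need repair: in the even case, replace the global ``precisely'' by the extreme-zero argument (or prove the support transfer at the specific endpoints you use); in the odd case, supply an actual mechanism relating nearby elements of $\Phi^+$ and $\Phi^-$, e.g.\ the product trick of Lemma~\ref{lem:main_lemma}.
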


Condition (1) describes a sort of symmetry of walk counts going on. In fact, if it holds, the vertices $1$ and $n$ are cospectral, and so the number of closed walks of a fixed length starting at either is invariant (see Lemma~\ref{lem:alphas}). Note that condition (1) holds trivially if there is a symmetry of $G$ reflecting the path about its centre. Condition (2) arises naturally as a necessary and sufficient condition for $1$ and $n$ to be strongly cospectral, a concept originated in the study of quantum walks that has received considerable amount of attention lately (\cite{CoutinhoSpectrallyExtremal2,godsil2017strongly,kempton2020characterizing,arnadottir2021strongly,sin2022large,coutinho2022strong}). We also point out that to prove Theorem~\ref{thm:main} for when the path has even length, we had to resource to a result in Ergodic theory regarding functions that preserve the Lebesgue measure, foreseeing deeper connections yet to be explored.

Despite the apparent unnatural connection between such cryptic hypotheses and the prosaic conclusion, we argue that Theorem~\ref{thm:main} is quite relevant. First, the development of the methods we need to prove it provides a rich theory to study the connection between combinatorics and eigenvalues, and we shall see an application to the study of integer eigenvalues of trees in the end of the paper; and second, we use it to prove a strong result about continuous-time quantum walks in graphs,  generalizing recent developments in the field and moving towards the proof of a known conjecture. 

An important task in the continuous-time quantum walk model for the evolution of a qubit system is perfect state transfer: a quantum state is placed at a particular qubit and upon the constant action of a Hamiltonian for some time, this state is fully recovered with probability $1$ somewhere else in the network. In terms of the graph and its adjacency matrix, perfect state transfer is equivalent to having, for vertices $i$ and $j$ and $t \in \Rds_+$,
\[
	\exp(\ii t A)e_i = \lambda e_j,
\]
where $e_i$ and $e_j$ denote the characteristic vectors of the vertices $i$ and $j$, and $\lambda \in \Cds$, with $|\lambda| = 1$. There are plenty recent articles which discuss the basics and more of this problem (see \cite{ChristandlPSTQuantumSpinNet2} for the seminal work and \cite{CoutinhoPhD} for an introductory treatment). Most results are either purposed to show new examples of graphs admitting this phenomenon, or to rule out natural or desirable candidates. We focus on the second task: arguably quantum physicists would appreciate sparse graphs admitting state transfer between vertices at large distance, and those as in Figure~\ref{fig:rooted} with an additional mirror-symmetry are natural candidates. We use Theorem~\ref{thm:main} to show that they do not, and as a side bonus, we also show that perfect state transfer cannot occur in a simple graph at time $t = \pi$, a fact that was previously unknown.

 Additionally, we substantially generalize a result by Coutinho, Godsil, Juliano and van Bommel \cite{CoutinhoGodsilJulianovanBommel}, that showed that graphs as Figure~\ref{fig:rooted} do not admit perfect state transfer between $1$ and $n$ as long as $n = 2$ or $n = 3$ and $G_2 \cong K_1$. Our main result resembles another one by Kempton, Lippner and Yau \cite{KemptonLippnerYauPotential}, who showed that if the graphs $G_1$,...,$G_n$ in Figure~\ref{fig:rooted} are weighted loops, then perfect state transfer does not occur between vertices $1$ and $n$, answering a question asked by Godsil in \cite{GodsilStateTransfer12}.

In the end of the paper, we address two applications regarding the well studied topic of integral trees. First we show that there is no balanced integral tree of odd diameter bigger than three, answering a question raised by Híc and Nedela; and second, we display an alternative interpretation of the algorithm introduced by Jacobs and Trevisan \cite{TrevisanJacobsOriginal} to locate eigenvalues of trees. Our interpretation uses the rational functions $\alpha_i^G$, and with it we could show that, in any graph, if two vertices are connected by a unique path, obtained upon subdividing a bridge with six inner vertices, then the graph cannot be integral.

In the next section, we briefly review the main tools we need, and we explore the connection between the rational functions $\alpha_i^G$ and strong cospectrality. In Section~\ref{sec:support}, we show that a decorated path whose $\alpha$'s are symmetric about the centre of the path can be folded just as it could be if the actual graphs $G_i$ were symmetric, leading to a notion of quotient graph stronger than that obtained in the study of equitable partitions. This will allow us to have a good understanding of how the eigenvectors behave within the path. In Section~\ref{sec:main} we prove Theorem~\ref{thm:main}. In Section~\ref{sec:sym_dec_paths}, we focus on our application to quantum walks, showing the perfect state transfer cannot occur in a graph $G$ as that in Figure~\ref{fig:rooted} if $\alpha_i^G = \alpha_{n+1-i}^G$, unless $G$ is $P_2$ or $P_3$. In Section~\ref{sec:trees}, we discuss the applications to the study of integral trees.

\section{Rational functions and strong cospectrality}\label{sec:prelim}

In this paper, we will denote by $\phi^G$ the characteristic polynomial of the graph $G$ in the variable $x$. If $\theta_0,\theta_1,\dots, \theta_d$ are the distinct eigenvalues of the adjacency matrix $A$ of $G$, then we denote by $E_r$ the orthogonal projection onto the  $\theta_r$-eigenspace. 

Two vertices $i$ and $j$ of the graph $G$ are called cospectral if $\phi^{G\setminus i}=\phi^{G\setminus j}$. This is equivalent to having, for every $r$ in $\{0,1,\dots,d\}$, that $(E_r)_{i,i}=(E_r)_{j,j}$. These diagonal entries are the norm of the columns of $E_r$ corresponding to vertices $i$ and $j$. If, moreover, $E_r e_i = \pm E_r e_j$ for every $r$ in $\{0,1,\dots,d\}$, then $i$ and $j$ are said to be strongly cospectral. It is easy to verify that this is equivalent to requiring that they are cospectral and that $(E_r)_{i,i} = \pm (E_r)_{i,j}$ for every $r$ in $\{0,1,\dots,d\}$.

Fixed vertex $i$, an important distinction among eigenvalues is whether $(E_r)_{i,i}$ is zero or not. The set of eigenvalues for which this is non-zero is defined as the \textit{eigenvalue support} of vertex $i$, which we denote by $\Phi_i(G)$ or $\Phi_i$ if $G$ is defined in the context. Note that cospectral vertices must have the same eigenvalue support.

Entries in the idempotents are determined by the characteristic polynomials of vertex deleted subgraphs (see for instance \cite[Section 2.3]{CoutinhoGodsilJulianovanBommel}). In fact,

\begin{equation}(E_r)_{i,j} = \lim_{t \to \theta_r} \frac{(t-\theta_r)\sqrt{\phi^{G \setminus i}\phi^{G \setminus j} - \phi^G \phi^{G \setminus \{i,j\}}}}{\phi^ G} , \label{eq:offdiagonal}
\end{equation}
with the understanding that if $i = j$ the square root colapses to $\phi^{G \setminus i}$. This leads to the following useful characterization of strongly cospectral vertices.

\begin{theorem}[Corollary 8.4 in \cite{godsil2017strongly}]\label{thm:strcospec}
Vertices $i$ and $j$ of a graph $G$ are strongly cospectral if and only if $\phi^{G \setminus i} = \phi^{G \setminus j}$ and all poles of $\phi^{G\setminus \{i,j\}}/\phi^G$ are simple.
\end{theorem}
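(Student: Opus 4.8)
The plan is to reduce both directions of the claimed equivalence to a single statement about the idempotent entries $(E_r)_{i,i}$, $(E_r)_{j,j}$, $(E_r)_{i,j}$, and then to read off the pole structure of $\phi^{G\setminus\{i,j\}}/\phi^G$ from these entries via one algebraic identity. Writing $\theta_0,\dots,\theta_d$ for the distinct eigenvalues and using the spectral resolution $(xI-A)^{-1}=\sum_r E_r/(x-\theta_r)$, I would first extract the $(i,i)$, $(j,j)$ and $(i,j)$ entries. Since the $(i,i)$ cofactor of $xI-A$ is $\phi^{G\setminus i}$, this gives the partial-fraction expansions $\phi^{G\setminus i}/\phi^G=\sum_r (E_r)_{i,i}/(x-\theta_r)$, and analogously for the $(j,j)$ and $(i,j)$ entries with residues $(E_r)_{j,j}$ and $(E_r)_{i,j}$. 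The structural fact I will lean on is that each of these three rational functions has \emph{only simple} poles, one at each distinct eigenvalue, regardless of eigenvalue multiplicities.

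The engine of the argument is Jacobi's identity for complementary minors of an inverse, applied to the symmetric matrix $B=xI-A$ with index pair $\{i,j\}$ (here the sign is $(-1)^{\sum\{i,j\}+\sum\{i,j\}}=+1$, so no sign subtleties arise). It yields
\[
(B^{-1})_{i,i}(B^{-1})_{j,j}-(B^{-1})_{i,j}^2 = \frac{\phi^{G\setminus\{i,j\}}}{\phi^G}.
\]
Substituting the partial-fraction expansions from the first step into the left-hand side, I obtain the identity that drives everything: $\phi^{G\setminus\{i,j\}}/\phi^G$ is the product of the two diagonal generating functions minus the square of the off-diagonal one, each of which has only simple poles.

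Next I would localise at a fixed eigenvalue $\theta_r$. Because each of the three functions has only a simple pole there, with residues $(E_r)_{i,i}$, $(E_r)_{j,j}$ and $(E_r)_{i,j}$, the coefficient of $(x-\theta_r)^{-2}$ in $\phi^{G\setminus\{i,j\}}/\phi^G$ is exactly $(E_r)_{i,i}(E_r)_{j,j}-(E_r)_{i,j}^2$, with no cross terms feeding into the double pole. As the only candidate poles lie at the $\theta_r$ and each function contributes at most a double pole, I conclude that all poles of $\phi^{G\setminus\{i,j\}}/\phi^G$ are simple if and only if $(E_r)_{i,i}(E_r)_{j,j}=(E_r)_{i,j}^2$ for every $r$.

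Finally I would bring in cospectrality, which appears on both sides of the equivalence. Assuming $\phi^{G\setminus i}=\phi^{G\setminus j}$ forces $(E_r)_{i,i}=(E_r)_{j,j}$ for all $r$, so the double-pole coefficient becomes $(E_r)_{i,i}^2-(E_r)_{i,j}^2$, whose vanishing is equivalent to $|(E_r)_{i,j}|=(E_r)_{i,i}$. By the characterisation recalled just before the statement, this condition together with cospectrality is precisely strong cospectrality, which closes both implications. The step demanding the most care is the minor identity of the second paragraph: I would either cite Jacobi's theorem with the sign check above or derive it directly, and I would make sure "all poles simple" is read as "no pole of order at least two", so that removable points cause no trouble. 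Everything else is residue bookkeeping together with the non-negativity $(E_r)_{i,i}(E_r)_{j,j}\geq (E_r)_{i,j}^2$, which is just the Cauchy–Schwarz inequality for the positive semidefinite idempotent $E_r$.
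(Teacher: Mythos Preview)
The paper does not supply its own proof of this statement: it is quoted as Corollary~8.4 from \cite{godsil2017strongly} and used as a black box. So there is no in-paper argument to compare against.

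Your proposed proof is correct and self-contained. The key identity you obtain via Jacobi's minor formula,
\[
\frac{\phi^{G\setminus\{i,j\}}}{\phi^G}=\frac{\phi^{G\setminus i}}{\phi^G}\cdot\frac{\phi^{G\setminus j}}{\phi^G}-\left(\frac{\sum_{P:i\to j}\phi^{G\setminus P}}{\phi^G}\right)^{\!2},
\]
is exactly the content of Lemma~\ref{lem:wronskian} in the paper after dividing by $(\phi^G)^2$, so you could equally cite that in place of Jacobi. Your residue computation is clean: since each of the three resolvent entries has only simple poles at the distinct eigenvalues, the $(x-\theta_r)^{-2}$ coefficient of the right-hand side is precisely $(E_r)_{i,i}(E_r)_{j,j}-(E_r)_{i,j}^2$, and under cospectrality this vanishes if and only if $(E_r)_{i,j}=\pm(E_r)_{i,i}$. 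The passage from this scalar condition back to $E_r e_i=\pm E_r e_j$ is exactly the Cauchy--Schwarz equality case you mention, and the paper itself records that equivalence in the paragraph preceding the theorem, so you may cite it there rather than rederiving it.
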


The expression within the square root in \eqref{eq:offdiagonal} is indeed a perfect square (when $i \neq j$), given below:

\begin{lemma}[Lemma 2.1 in \cite{GodsilAlgebraicCombinatorics}]\label{lem:wronskian} 
Let $i$ and $j$ be vertices in the graph $G$. Then,
\[\phi^{G\setminus i}\phi^{G\setminus j}-\phi^{G\setminus\{i,j\}}\phi^G=\left(\sum_{P:i\to j}\phi^{G\setminus P}\right)^2,\]
where the sum is over all the paths from $i$ to $j$.
\end{lemma}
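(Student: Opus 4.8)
The plan is to translate the statement into a determinantal identity and then isolate its combinatorial content. Write $M = xI - A(G)$, a symmetric matrix, so that $\phi^{G} = \det M$ and, for any set $S$ of vertices, $\phi^{G\setminus S} = \det M(S)$, where $M(S)$ is the principal submatrix obtained by deleting the rows and columns indexed by $S$. With this dictionary, the left-hand side is exactly the quantity governed by the Desnanot--Jacobi (Dodgson condensation) identity applied to the two indices $i$ and $j$, which gives
\[
\phi^{G\setminus i}\,\phi^{G\setminus j}-\phi^{G\setminus\{i,j\}}\,\phi^{G} \;=\; \det M(i\mid j)\,\det M(j\mid i),
\]
where $M(a\mid b)$ denotes $M$ with row $a$ and column $b$ deleted. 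Since $M$ is symmetric, $(M(i\mid j))^{\top} = M(j\mid i)$, so the right-hand side is the perfect square $\big(\det M(i\mid j)\big)^2$. Thus the whole statement reduces to the single identity
\[
\det M(i\mid j) \;=\; \pm\sum_{P:i\to j}\phi^{G\setminus P},
\]
where the ambiguous sign is irrelevant because this quantity is ultimately squared.

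The heart of the argument, and the step I expect to be the main obstacle, is this last identity, which is the path expansion of the adjugate (equivalently, of the resolvent entry $(xI-A)^{-1}_{ij}$). I would prove it by expanding $\det M(i\mid j)$ over bijections $\sigma$ from the row index set $V\setminus\{i\}$ to the column index set $V\setminus\{j\}$ and reading each nonzero term as a functional digraph with an arc $k\to\sigma(k)$ for every $k\neq i$. In this digraph $j$ has out-degree $1$ and in-degree $0$, $i$ has in-degree $1$ and out-degree $0$, and every other vertex has both degrees equal to $1$; hence the arcs decompose uniquely into one directed path $P$ from $j$ to $i$ (an $i$--$j$ path in $G$, since $G$ is undirected) together with a disjoint family of cycles, including loops coming from the diagonal factors $x$, covering $V\setminus P$. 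Grouping the terms by the underlying path $P$, the factors carried by the edges of $P$ contribute a fixed monomial, while summing the signed contributions of all cycle covers of the complementary vertex set reproduces precisely $\phi^{G\setminus P} = \det M(P)$.

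The delicate point is the sign bookkeeping: one must check that the permutation sign $\operatorname{sgn}(\sigma)$, the sign $-1$ carried by each off-diagonal factor $M_{k,\sigma(k)} = -A_{k,\sigma(k)}$, and the global offset from deleting row $i$ and column $j$ combine into a single sign $(-1)^{i+j}$ that is the \emph{same} for every path $P$; only then do the $\phi^{G\setminus P}$ add up with uniform sign rather than partially cancel. Once this is verified, every summand on the right-hand side shares the same sign, which disappears upon squaring, and the identity follows. A more self-contained route that avoids quoting Desnanot--Jacobi is to expand both $\phi^{G\setminus i}\phi^{G\setminus j}$ and $\phi^{G\setminus\{i,j\}}\phi^{G}$ directly as signed sums over pairs of cycle covers and to build a sign-reversing involution cancelling all configurations except those formed from two disjoint $i$--$j$ paths, which survive and assemble into the square on the right; this merely recasts the same sign analysis as the essential difficulty.
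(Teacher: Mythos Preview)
The paper does not prove this lemma at all: it is quoted verbatim from Godsil's \emph{Algebraic Combinatorics} with a citation and no argument, so there is no ``paper's own proof'' to compare against. Your proposal is a correct proof, and in fact it is essentially the standard one found in that reference. The two ingredients you isolate --- the Desnanot--Jacobi/Jacobi cofactor identity giving
\[
\phi^{G\setminus i}\phi^{G\setminus j}-\phi^{G\setminus\{i,j\}}\phi^{G}=\det M(i\mid j)\det M(j\mid i)=\bigl(\det M(i\mid j)\bigr)^{2},
\]
together with the path expansion $(-1)^{i+j}\det M(i\mid j)=\sum_{P:i\to j}\phi^{G\setminus P}$ --- are exactly how Godsil organizes the argument. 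Your reading of the nonzero terms of $\det M(i\mid j)$ as a directed $j\to i$ path plus a cycle cover of the complement is the right picture, and your caution about the sign bookkeeping is well placed: the clean way to finish is to observe that the permutation associated with a path of length $\ell$ together with cycles of lengths $c_1,\dots,c_r$ has sign $(-1)^{\ell}\prod_r(-1)^{c_r-1}$, each off-diagonal factor of $M=xI-A$ contributes a $-1$, and these combine with the $(-1)^{i+j}$ from the minor position to leave every path term with the same sign. Once that is checked, squaring removes the sign and the identity follows.
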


In this paper, we will repeatedly manipulate the ratio between the characteristic polynomials of a graph and its vertex deleted subgraphs. To facilitate our notation, we introduce
\begin{equation}
\alpha_i^G =\dfrac{\phi^G}{\phi^{G\setminus i}},
\label{eq:alpha}
\end{equation}
for any graph $G$ and vertex $i$.

It is a consequence of \eqref{eq:offdiagonal} that $\alpha^G_i(\theta) = 0$ if and only if $\theta$ is an eigenvalue of $G$ in the eigenvalue support of $i$. The following lemma estabilishes analytical properties of this rational function.

\begin{lemma}[Lemma 4 in \cite{coutinho2022strong}] \label{lemma:derivative_quotient} Let $i$ be a vertex in the graph $G$. Then, $(\alpha_i^G)'(t)\geq 1$ for every $t$ that is not a pole of $\alpha_i^G$. In particular, $\alpha_i^G(t)$ has only simple zeros and poles, and is increasing and surjective on each of its branches.
\end{lemma}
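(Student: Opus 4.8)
I would study the reciprocal function $g := 1/\alpha_i^G = \phi^{G\setminus i}/\phi^G$. Writing $A = A(G)$ and using the spectral decomposition of the resolvent, $(xI - A)^{-1} = \sum_{r=0}^{d}(x-\theta_r)^{-1}E_r$, one gets
$g(x) = e_i^{\top}(xI-A)^{-1}e_i = \sum_{r=0}^{d}\frac{m_r}{x-\theta_r}$,
where $m_r := (E_r)_{i,i}\ge 0$; equivalently, $g$ has only simple poles and its residues are given by \eqref{eq:offdiagonal} in the diagonal case. Since $\sum_r E_r = I$ we have $\sum_r m_r = 1$. Differentiating termwise, $g'(x) = -\sum_r \frac{m_r}{(x-\theta_r)^2}$, which is strictly negative at every $x$ that avoids all the $\theta_r$ (at least one $m_r$ is positive). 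The key estimate comes from Cauchy--Schwarz applied to the vectors $(\sqrt{m_r})_r$ and $(\sqrt{m_r}/(x-\theta_r))_r$:
\[
g(x)^2 = \Bigl(\sum_r \tfrac{m_r}{x-\theta_r}\Bigr)^2 \le \Bigl(\sum_r m_r\Bigr)\Bigl(\sum_r \tfrac{m_r}{(x-\theta_r)^2}\Bigr) = -\,g'(x).
\]

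Next I would deduce the derivative bound. Fix $t$ that is not a pole of $\alpha_i^G$. If $\alpha_i^G(t)\neq 0$, then $g(t)$ is finite and non-zero, so by the quotient rule and the estimate above $(\alpha_i^G)'(t) = -g'(t)/g(t)^2 \ge 1$. The remaining points where $\alpha_i^G$ is defined are its zeros, which (by \eqref{eq:offdiagonal}, as recorded just after \eqref{eq:alpha}) are exactly the eigenvalues $\theta_r$ with $m_r>0$, i.e. the poles of $g$. Near such a point $g(x) = \frac{m_r}{x-\theta_r} + h(x)$ with $h$ holomorphic there, whence $\alpha_i^G(x) = \frac{x-\theta_r}{m_r + (x-\theta_r)h(x)}$ has a simple zero at $\theta_r$ with derivative $1/m_r \ge 1$, using $0 < m_r \le 1$. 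This proves $(\alpha_i^G)'\ge 1$ everywhere $\alpha_i^G$ is defined, and shows all of its zeros are simple. A pole of $\alpha_i^G$ is a zero of $g$, and since $g'<0$ there it is a simple zero, so the pole is simple. Strict monotonicity on each branch is then immediate.

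Finally, for surjectivity on the branches I would use the sign of $g'$ once more: $g$ crosses each of its simple zeros from positive to negative, so $\alpha_i^G = 1/g$ tends to $+\infty$ as $t$ approaches a pole from the left and to $-\infty$ from the right; together with $\alpha_i^G(t)=\phi^G/\phi^{G\setminus i}\sim t$ as $t\to\pm\infty$, this shows that on every maximal interval free of poles (including the two unbounded ones) $\alpha_i^G$ increases continuously from $-\infty$ to $+\infty$, hence is a bijection onto $\mathbb{R}$. I do not expect a genuine obstacle; the only steps needing care are handling the zeros of $\alpha_i^G$ (where the quotient rule does not apply and one must expand $g$ locally) and extracting the sign of $g'$ at the poles of $\alpha_i^G$ in order to pin down the one-sided limits used for surjectivity.
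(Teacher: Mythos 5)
Your proof is correct: the partial-fraction expansion $\phi^{G\setminus i}/\phi^G=\sum_r (E_r)_{ii}/(x-\theta_r)$ with nonnegative residues summing to $1$, followed by Cauchy--Schwarz to get $g^2\le -g'$, is exactly the standard argument, and it is the one used in the cited source (the present paper only quotes the lemma from \cite{coutinho2022strong} without reproving it). Your handling of the two delicate points --- the zeros of $\alpha_i^G$, where the derivative is $1/m_r\ge 1$ directly, and the sign of $g'$ at its zeros for the one-sided limits --- is also right, so there is nothing to fix.
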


Lemma~\ref{lemma:derivative_quotient} says that all zeros and all poles of $\alpha^G_i$ are simple. This is equivalent to the well-known fact that if $\theta$ is an eigenvalue of $A(G)$, then its multiplicity as an eigenvalue of $A(G\setminus i)$ differs by at most one. Precisely:
\begin{itemize}
	\item $\theta$ is a zero of $\alpha^G_i$ if and only if $\mathrm{mult}(\theta,A(G \setminus i)) + 1= \mathrm{mult}(\theta,A(G))$, and equivalently, $\theta$ is an eigenvalue of $G$ in the support of $i$;
	\item $\alpha^G_i (\theta) \neq 0,\infty$ if and only if $\mathrm{mult}(\theta,A(G \setminus i)) = \mathrm{mult}(\theta,A(G))$ (which includes the possibility both are equal to $0$);
	\item $\alpha^G_i (\theta) = \infty$ if and only if $\mathrm{mult}(\theta,A(G \setminus i)) = \mathrm{mult}(\theta,A(G)) + 1$.	
\end{itemize}

These rational functions can be used to characterize strongly cospectral vertices in graphs.

\begin{lemma}[Lemma 9 in \cite{coutinho2022strong}] \label{lemma:alpha-strcospec}
	Let $i$ and $j$ be distinct vertices in the graph $G$. Then, $i$ and $j$ are strongly cospectral if, and only if, $\alpha_i^G=\alpha_j^{G}$ and $\alpha_i^G(\theta)=\alpha_j^{G}(\theta)\neq 0$ whenever $\alpha_i^{G\setminus j}(\theta)=0$ or $\alpha_j^{G\setminus i}(\theta)=0$.
\end{lemma}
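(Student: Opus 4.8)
The plan is to deduce the statement from the characterization of strong cospectrality already recorded in Theorem~\ref{thm:strcospec}, simply rewriting its two clauses in the language of the $\alpha$'s. The first clause is immediate: since $\alpha_i^G=\phi^G/\phi^{G\setminus i}$ and $\alpha_j^G=\phi^G/\phi^{G\setminus j}$, and $\mathbb{R}[x]$ is an integral domain, the identity $\alpha_i^G=\alpha_j^G$ of rational functions is equivalent to $\phi^{G\setminus i}=\phi^{G\setminus j}$, i.e.\ to $i$ and $j$ being cospectral. So I would assume $i$ and $j$ cospectral for the rest of the argument and concentrate on matching the remaining clause of the Lemma with ``all poles of $\phi^{G\setminus\{i,j\}}/\phi^G$ are simple''.

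The tool for that is the consequence of Lemma~\ref{lemma:derivative_quotient} that for any graph $H$, vertex $k$ and $\theta$ one has $\mathrm{ord}_\theta(\phi^H)-\mathrm{ord}_\theta(\phi^{H\setminus k})\in\{-1,0,1\}$, this quantity being $+1$ exactly when $\alpha_k^H(\theta)=0$ and $-1$ exactly when $\alpha_k^H(\theta)=\infty$. Writing the deletion of $\{i,j\}$ as $G\to G\setminus i\to G\setminus\{i,j\}$ (and also as $G\to G\setminus j\to G\setminus\{i,j\}$) and adding the two increments gives $\mathrm{ord}_\theta(\phi^G)-\mathrm{ord}_\theta(\phi^{G\setminus\{i,j\}})\in\{-2,\dots,2\}$; hence a pole of $\phi^{G\setminus\{i,j\}}/\phi^G$ at $\theta$ fails to be simple iff this difference equals $2$, and since a sum of two elements of $\{-1,0,1\}$ equals $2$ only as $1+1$, this happens iff $\alpha_i^G(\theta)=0$ and $\alpha_j^{G\setminus i}(\theta)=0$, equivalently (reading the other deletion order and using $\phi^{G\setminus i}=\phi^{G\setminus j}$ to identify $\alpha_j^G$ with $\alpha_i^G$) iff $\alpha_j^G(\theta)=0$ and $\alpha_i^{G\setminus j}(\theta)=0$.

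Both implications are then short. If $i$ and $j$ are strongly cospectral they are cospectral, so $\alpha_i^G=\alpha_j^G$; and by Theorem~\ref{thm:strcospec} there is no $\theta$ realizing the order difference $2$, so whenever $\alpha_j^{G\setminus i}(\theta)=0$ we get $\alpha_i^G(\theta)\neq0$, and whenever $\alpha_i^{G\setminus j}(\theta)=0$ we get $\alpha_j^G(\theta)\neq0$; in either case $\alpha_i^G(\theta)=\alpha_j^G(\theta)\neq0$, which is the second clause. Conversely, $\alpha_i^G=\alpha_j^G$ yields cospectrality, and if some pole of $\phi^{G\setminus\{i,j\}}/\phi^G$ were not simple then at that $\theta$ we would have $\alpha_j^{G\setminus i}(\theta)=0$ and $\alpha_i^G(\theta)=0$, contradicting the Lemma's second clause; so all such poles are simple and Theorem~\ref{thm:strcospec} delivers strong cospectrality.

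The only thing needing care is the bookkeeping of the two deletion orders: the ``$\alpha_j^{G\setminus i}(\theta)=0$'' half of the hypothesis reads off the order drop along $G\to G\setminus i\to G\setminus\{i,j\}$, the ``$\alpha_i^{G\setminus j}(\theta)=0$'' half reads it off along $G\to G\setminus j\to G\setminus\{i,j\}$, and cospectrality is exactly what makes the two descriptions coincide; one should also remember to count the extended value $\alpha_i^G(\theta)=\infty$ as ``$\neq 0$''. If one would rather not invoke Theorem~\ref{thm:strcospec}, the same conclusion follows directly from \eqref{eq:offdiagonal} and Lemma~\ref{lem:wronskian}, noting that for cospectral $i,j$ the polynomial $\sum_{P:i\to j}\phi^{G\setminus P}$ governs the off-diagonal idempotent entries, but routing through Theorem~\ref{thm:strcospec} keeps the analytic input to a minimum.
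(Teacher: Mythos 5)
Your proof is correct. Note that the paper itself does not prove this statement --- it is imported verbatim as Lemma~9 of \cite{coutinho2022strong} --- so there is no in-paper argument to compare against; but your derivation from Theorem~\ref{thm:strcospec} is a sound, self-contained substitute. The two reductions are both right: $\alpha_i^G=\alpha_j^G$ is equivalent to $\phi^{G\setminus i}=\phi^{G\setminus j}$ because $\phi^G$ is a nonzero polynomial, and the multiplicity bookkeeping along $G\to G\setminus i\to G\setminus\{i,j\}$ (each step changing $\mathrm{ord}_\theta$ by at most one, with $+1$ precisely at zeros of the corresponding $\alpha$, as in the bulleted list after Lemma~\ref{lemma:derivative_quotient}) correctly identifies a double pole of $\phi^{G\setminus\{i,j\}}/\phi^G$ at $\theta$ with the simultaneous vanishing $\alpha_i^G(\theta)=\alpha_j^{G\setminus i}(\theta)=0$, and symmetrically for the other deletion order. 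Your closing caveats --- that cospectrality is what reconciles the two deletion orders, and that $\alpha_i^G(\theta)=\infty$ counts as ``$\neq 0$'' --- are exactly the points where a careless version of this argument would go wrong, and you handle them correctly.
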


We end this section with our first results, describing a wide class of graphs such as those in Figure~\ref{fig:rooted} for which vertices $1$ and $n$ are cospectral.

\begin{lemma} \label{lem:alphas}
	Let $G$ be the rooted product of $P_n$ with graphs $G_1$,...,$G_n$, as in Figure~\ref{fig:rooted}. Assume 
	\[
		\alpha^{G_i}_i = \alpha^{G_{n+1-i}}_{n+1-i}.
	\]
	Then $\alpha_1^G=\alpha_n^{G}$.
\end{lemma}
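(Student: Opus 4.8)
The plan is to expand $\alpha_1^G$ --- and symmetrically $\alpha_n^G$ --- as a finite continued fraction whose successive ``digits'' are the rational functions $\alpha_i^{G_i}$ read in order along the path, and then to observe that the hypothesis $\alpha^{G_i}_i=\alpha^{G_{n+1-i}}_{n+1-i}$ makes the two continued fractions coincide term by term.

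The engine is Schwenk's bridge-deletion formula. The edge $e$ of the path joining vertices $1$ and $2$ lies on no cycle of $G$, so $\phi^G=\phi^{G-e}-\phi^{G\setminus\{1,2\}}$. Here $G-e=G_1\sqcup H$, where $G_1$ is the decoration at vertex $1$ and $H$ is the rooted product of a path on $n-1$ vertices with $G_2,\dots,G_n$ rooted at vertex $2$; likewise $G\setminus\{1,2\}=(G_1\setminus 1)\sqcup(H\setminus 2)$. Hence $\phi^G=\phi^{G_1}\phi^{H}-\phi^{G_1\setminus 1}\phi^{H\setminus 2}$, and since also $G\setminus 1=(G_1\setminus 1)\sqcup H$, dividing through by the nonzero polynomial $\phi^{G\setminus 1}=\phi^{G_1\setminus 1}\,\phi^{H}$ yields the identity of rational functions
\[
\alpha_1^G=\alpha_1^{G_1}-\frac{1}{\alpha_2^{H}}.
\]
Because $H$ is again a decorated path, induction on $n$ (with the trivial base case $n=1$, where $1=n$) gives $\alpha_1^G=F(a_1,a_2,\dots,a_n)$, where $a_i:=\alpha_i^{G_i}$ and $F$ is defined by $F(b_1)=b_1$ and $F(b_1,\dots,b_m)=b_1-1/F(b_2,\dots,b_m)$; explicitly,
\[
\alpha_1^G=a_1-\cfrac{1}{a_2-\cfrac{1}{\;\ddots\;-\cfrac{1}{a_n}}}.
\]

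Running the same argument from the opposite end of the path --- delete the edge between vertices $n-1$ and $n$, which splits $G$ into the decoration $G_n$ and the decorated path on $G_1,\dots,G_{n-1}$ rooted at $n-1$ --- gives $\alpha_n^G=F(a_n,a_{n-1},\dots,a_1)$. Now invoke the hypothesis: $a_i=a_{n+1-i}$ for every $i$, so the $k$-th entry of $(a_n,a_{n-1},\dots,a_1)$ is $a_{n+1-k}=a_k$, i.e.\ the reversed tuple is literally the tuple $(a_1,\dots,a_n)$. Therefore $\alpha_n^G=F(a_n,\dots,a_1)=F(a_1,\dots,a_n)=\alpha_1^G$. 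I do not expect a genuine obstacle here; the only points needing care are that the cancellations producing the recursion are identities in the field of rational functions (so that no spurious pole issues intrude), and keeping the inductive set-up consistent, each deletion really leaving a smaller decorated path of the type in Figure~\ref{fig:rooted}. As a byproduct this re-proves that $\phi^{G\setminus 1}=\phi^{G\setminus n}$, i.e.\ that $1$ and $n$ are cospectral.
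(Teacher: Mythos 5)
Your proof is correct and follows essentially the same route as the paper: both derive the recursion $\alpha_1^G=\alpha_1^{G_1}-1/\alpha_2^{H}$ via a Schwenk-type identity (you delete the bridge $\{1,2\}$, the paper expands at the root vertex using the cycle version, arriving at the same polynomial identity $\phi^G=\phi^{G_1}\phi^H-\phi^{G_1\setminus 1}\phi^{H\setminus 2}$), iterate to get the continued fraction of Equation~\eqref{eq:alphasfraction}, and conclude from the hypothesis that the reversed tuple of partial quotients is the same tuple. Your explicit statement of the reversal step and of the cospectrality byproduct is just a spelled-out version of what the paper leaves as ``the result follows immediately.''
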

\begin{proof}
	Schwenk showed \cite[Theorem 2]{SchwenkCharPol} that, for any vertex $a$ of $G$, if $\mathcal{C}(a)$ denotes the set of cycles containing $a$, then
	\begin{equation}
		\phi^G = x \phi^{G\setminus a} - \sum_{b \sim a} \phi^{G \setminus \{a,b\}} - 2 \sum_{C \in \mathcal{C}(a)} \phi^{G \setminus V(C)}. \label{eq:schwenk}
	\end{equation}
	Let $H$ be the subgraph of $G$ obtained upon the removal of $G_1$. Recall that the characteristic polynomial of disconnected graphs is the product of the characteristic polynomials of each component. Because all cycles of $G$ containing $i$ are entirely contained in $G_i$, it follows that
	\[
		\alpha_1^G = \alpha^{G_1}_1 - \frac{1}{\alpha_2^H},
	\]
	and therefore
	\begin{equation}
		\alpha_1^G = \alpha^{G_1}_1 - \dfrac{1}{\alpha^{G_2}_2 - \dfrac{1}{\alpha^{G_3}_3 - \dfrac{1}{\ddots\ - \dfrac{1}{\alpha^{G_n}_n}}}}, \label{eq:alphasfraction}
	\end{equation}
	from which the result follows immediately.
\end{proof}

The condition in the theorem is a natural sufficient condition, but it is not necessary. The simple example below shows two cospectral vertices for which the equalities for the $\alpha$'s do not hold.

\begin{figure}[H]
\begin{center}
		\includegraphics{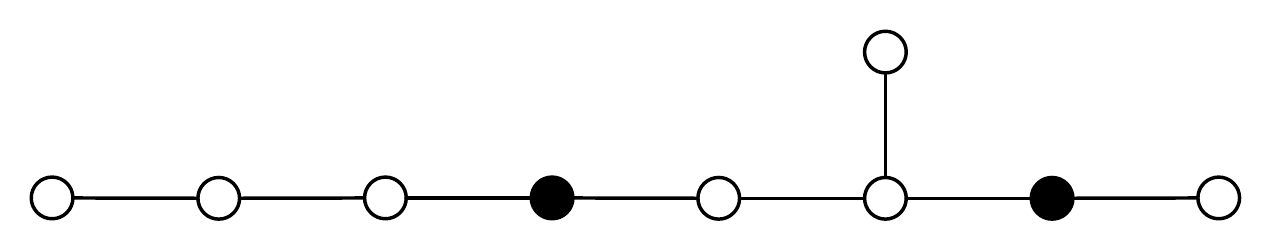}
		\caption{Shaded vertices are (strongly) cospectral, but the condition of Lemma~\ref{lem:alphas} does not apply.}
\end{center} \label{fig:schwenk}
\end{figure}

 We now assume vertices $1$ and $n$ are cospectral in $G$ (see Figure~\ref{fig:rooted}), that is, $\alpha_1^G=\alpha_n^{G}$, and show that the second condition in Lemma~\ref{lemma:alpha-strcospec} can be phrased nicely in terms of the $G_i$'s.
 
 This is a convenient moment to introduce a notation. If $f(x)$ is a rational function, we will say that $f(\theta) = \infty$ to indicate that $\theta$ is a pole of $f$. Because all rational functions in this paper are strictly increasing in each of their branches, we ask your forgiveness to state that, here, $\infty + \infty = \infty$.
 
\begin{theorem} \label{thm:strcospeccharact}
	Let $G$ be the rooted product of $P_n$ with graphs $G_1$,...,$G_n$, as in Figure~\ref{fig:rooted}, and assume $\alpha_1^G=\alpha_n^{G}$. Then $1$ and $n$ are strongly cospectral if and only if, for all $\theta$ in their eigenvalue support, and for all $k \in \{1,...,n\}$, $\alpha_k^{G_k}(\theta) \neq \infty$.
\end{theorem}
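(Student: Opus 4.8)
The plan is to deduce the statement from Lemma~\ref{lemma:alpha-strcospec} together with the continued fraction expansion \eqref{eq:alphasfraction} from the proof of Lemma~\ref{lem:alphas}. Since $\alpha_1^G=\alpha_n^G$ by hypothesis, the first clause of Lemma~\ref{lemma:alpha-strcospec} holds automatically, and — reading $\infty\neq 0$ as agreed — its second clause says exactly that for every $\theta$ with $\alpha_1^G(\theta)=0$, i.e. for every $\theta$ in the common eigenvalue support of $1$ and $n$, one has both $\alpha_1^{G\setminus n}(\theta)\neq 0$ and $\alpha_n^{G\setminus 1}(\theta)\neq 0$. So the theorem reduces to the pointwise assertion: for $\theta$ with $\alpha_1^G(\theta)=0$, every $\alpha_k^{G_k}(\theta)$ is finite if and only if $\alpha_1^{G\setminus n}(\theta)\neq 0$ and $\alpha_n^{G\setminus 1}(\theta)\neq 0$.

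Next I would describe the two truncated rational functions combinatorially. Deleting vertex $n$ splits off the component $G_n\setminus n$ and leaves the rooted product of $P_{n-1}$ with $G_1,\dots,G_{n-1}$; hence $\alpha_1^{G\setminus n}$ equals $\alpha_1$ of that shorter decorated path, and applying \eqref{eq:alphasfraction} to it shows that $\alpha_1^{G\setminus n}$ is precisely the continued fraction for $\alpha_1^G$ with its last level $\alpha_n^{G_n}$ deleted. Symmetrically, $\alpha_n^{G\setminus 1}$ is the continued fraction for $\alpha_n^G$, read from the $n$ end, with its last level $\alpha_2^{G_2}$ deleted.

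For the pointwise assertion I would encode the continued fraction by Möbius transformations. Fix $\theta$ and let $M_k$ denote the map $z\mapsto \alpha_k^{G_k}(\theta)-1/z$: a projective transformation of determinant $1$ when $\alpha_k^{G_k}(\theta)$ is finite, and the constant map $z\mapsto\infty$ when $\alpha_k^{G_k}(\theta)=\infty$. Then $\alpha_1^G(\theta)=(M_1\cdots M_n)(\infty)$, $\alpha_1^{G\setminus n}(\theta)=(M_1\cdots M_{n-1})(\infty)$, and the analogous reversed products compute $\alpha_n^G(\theta)$ and $\alpha_n^{G\setminus 1}(\theta)$. If all $\alpha_k^{G_k}(\theta)$ are finite, write $W=M_1\cdots M_n=\left(\begin{smallmatrix}a&b\\c&d\end{smallmatrix}\right)$ with $\det W=1$; then $\alpha_1^G(\theta)=a/c=0$ forces $a=0$, hence $bc=-1$, hence $b\neq 0$, and $\alpha_1^{G\setminus n}(\theta)=(WM_n^{-1})(\infty)=b/d\neq 0$ (reading this as $\infty$, still nonzero, if $d=0$); the reversed product handles $\alpha_n^{G\setminus 1}(\theta)$ identically. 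Conversely, if some $\alpha_k^{G_k}(\theta)=\infty$, then $k\neq 1$ (a constant $M_1$ would give $\alpha_1^G(\theta)=\infty\neq 0$); letting $k_0$ be the least such index, the constant map $M_{k_0}$ collapses the tail of the composition, so both $\alpha_1^G(\theta)$ and $\alpha_1^{G\setminus n}(\theta)$ equal $(M_1\cdots M_{k_0-1})(\infty)$, and therefore $\alpha_1^{G\setminus n}(\theta)=\alpha_1^G(\theta)=0$. This establishes both directions of the pointwise assertion, and the theorem follows.

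The main obstacle is the bookkeeping in the ``$\alpha_k^{G_k}(\theta)=\infty$'' cases: there the matrix $M_k$ degenerates, so one must work projectively (or clear denominators by hand) and check that the collapse of the composition affects the full continued fraction and its one-level truncation in exactly the same way — which is what forces $\alpha_1^{G\setminus n}(\theta)=0$ rather than merely altering its value. The rest is routine bookkeeping with \eqref{eq:alphasfraction} and Lemma~\ref{lemma:alpha-strcospec}, together with the small observation that vertex $1$ cannot itself sit below a pole of $\alpha_1^{G_1}$ while being in the support.
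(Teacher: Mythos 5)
Your overall route is the same as the paper's: reduce via Lemma~\ref{lemma:alpha-strcospec} (using $\alpha_1^G=\alpha_n^G$, hence $\alpha_1^{G\setminus n}=\alpha_n^{G\setminus 1}$) to the pointwise claim that $\alpha_1^G(\theta)=0$ forces $\alpha_1^{G\setminus n}(\theta)\neq 0$ exactly when no $\alpha_k^{G_k}$ has a pole at $\theta$, and then read both functions off the continued fraction \eqref{eq:alphasfraction}. Your treatment of the all-finite direction is genuinely different and rather slick: the paper peels off one level of the continued fraction at a time by induction, whereas your observation that $W=M_1\cdots M_n$ has determinant $1$, so $a=0$ forces $b\neq 0$ and hence $(WM_n^{-1})(\infty)=b/d\neq 0$, gets the same conclusion in one line. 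That part is correct.

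The gap is in the other direction, and it sits exactly where you flagged it. When $\alpha_{k_0}^{G_{k_0}}(\theta)=\infty$, the map $z\mapsto \alpha_{k_0}^{G_{k_0}}(\theta)-1/z$ is \emph{not} a well-defined constant map to $\infty$: at $z=0$ it reads $\infty-\infty$ (projectively, the degenerate matrix $\bigl(\begin{smallmatrix}p(\theta)&0\\0&0\end{smallmatrix}\bigr)$ is undefined precisely on its kernel $[0:1]$), and $z=0$ is exactly the dangerous input, occurring when the tail $\alpha_{k_0+1}^{G(k_0+1)}(\theta)=0$. Clearing denominators does not resolve it either: writing $\alpha_{k_0}^{G_{k_0}}-1/\beta=(p\beta-q)/(q\beta)$ with $q(\theta)=\beta(\theta)=0$ gives a $0/0$ whose value depends on whether $p(\theta)\beta'(\theta)-q'(\theta)$ vanishes. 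What closes this is not bookkeeping but the analytic input of Lemma~\ref{lemma:derivative_quotient}: every function in sight has only simple poles and is increasing on each branch, so near a pole it behaves like $-c/(x-\theta)$ with $c>0$; two such poles therefore add rather than cancel, and $\infty+\infty=\infty$ is legitimate. This is precisely the step the paper makes explicit (it is also needed for your closing remark that $\alpha_1^{G_1}(\theta)=\infty$ would force $\alpha_1^G(\theta)=\infty$). Once that is supplied, the collapse of both $\alpha_1^G(\theta)$ and $\alpha_1^{G\setminus n}(\theta)$ to $(M_1\cdots M_{k_0-1})(\infty)$ goes through as you describe, and the proof is complete.
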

\begin{proof}
	From a simple expansion on each side, note that $\alpha_1^G\alpha_n^{G \setminus 1} = \alpha_n^G\alpha_1^{G \setminus n}$, and because $\alpha_1^G=\alpha_n^{G}$ by hypothesis, it follows that $\alpha_n^{G \setminus 1} = \alpha_1^{G \setminus n}$. From Equation~\eqref{eq:alphasfraction}, it follows that
	\[
	\alpha_1^G = \alpha^{G_1}_1 - \dfrac{1}{\alpha^{G_2}_2 - \dfrac{1}{\ddots\ -\dfrac{1}{\alpha^{G_n}_n}}},  \quad \text{and} \quad \alpha_1^{G\setminus n} = \alpha^{G_1}_1 - \dfrac{1}{\alpha^{G_2}_2 - \dfrac{1}{\ddots\ - \dfrac{1}{\alpha^{G_{n-1}}_{n-1}}}}.
	\]

	From Lemma~\ref{lemma:alpha-strcospec}, the two vertices are strongly cospectral if, and only if, $\alpha_1^G(\theta) = 0$ implies that $\alpha_1^{G \setminus n} (\theta) \neq 0$, that is, if for all $\theta$ in the eigenvalue support of $1$, $\alpha_1^{G \setminus n} (\theta) \neq 0$.
	
	To show one direction, assume that for some $k$ and some $\theta$ in the eigenvalue support, we have $\alpha_k^{G_k}(\theta) = \infty$. If $H$ is obtained from $G$ upon removal of vertices $1$ to $k$, note that
	\[
	\alpha_1^H = \alpha^{G_{k+1}}_{k+1} - \dfrac{1}{\alpha^{G_{k+2}}_{k+2} - \dfrac{1}{\ddots\ -\dfrac{1}{\alpha^{G_n}_n}}},  \quad \text{and} \quad \alpha_1^{H\setminus n} = \alpha^{G_{k+1}}_{k+1} - \dfrac{1}{\alpha^{G_{k+2}}_{k+2} - \dfrac{1}{\ddots\ - \dfrac{1}{\alpha^{G_{n-1}}_{n-1}}}}.
	\]
	Both these functions have simple poles and simple zeros and are strictly increasing in their branches (Lemma~\ref{lemma:derivative_quotient}), thus $-(\alpha_1^H)^{-1}$ and $-(\alpha_1^{H\setminus n})^{-1}$ also have simple zeros and simple poles and are strictly increasing in their branches, therefore for all $\theta$ which is a pole of $\alpha_k^{G_k}$, even if $\theta$ is one of the poles of $-(\alpha_1^H)^{-1}$ or $-(\alpha_1^{H\setminus n})^{-1}$, it still follows that
	\[
		\alpha_k^{G_k}(\theta) - \frac{1}{\alpha_1^H(\theta)} = \alpha_k^{G_k}(\theta) - \frac{1}{\alpha_1^{H\setminus n}(\theta)} = \infty.
	\]
	(that is, $\infty + \infty = \infty$.) As a consequence, $\alpha_1^G(\theta) = \alpha_1^{G\setminus n}(\theta)$, and the vertices cannot be strongly cospectral.
	
	For the other direction, assume that for all $\theta$ in the eigenvalue support of $1$ and $n$, and for all $k \in \{1,...,n\}$, $\alpha_k^{G_k}(\theta) \neq \infty$. Let $G(k)$ denote the graph $G$ with vertices $\{1,...,k-1\}$ removed. Assume for the sake of finding a contradiction that there exists $\theta$ giving $\alpha_1^G(\theta) = \alpha_1^{G \setminus n} (\theta) = 0$. This implies that
	\[
		\alpha_2^{G(2)}(\theta) = \alpha_2^{G(2) \setminus n} (\theta).
	\]
	By induction now, if $\alpha_k^{G(k)}(\theta) = \alpha_k^{G(k) \setminus n} (\theta)$, and because $\alpha_k^{G_k}(\theta) \neq \infty$, it must be that $\alpha_{k+1}^{G(k+1)}(\theta) = \alpha_{k+1}^{G(k+1) \setminus n} (\theta)$. Thus we conclude that
	\[
		\alpha^{G_{n-1}}_{n-1}(\theta) = \alpha^{G_{n-1}}_{n-1}(\theta) - \frac{1}{\alpha^{G_{n}}_{n}(\theta)},
	\]
	which cannot be true, as $\theta$ is not a pole of any of these terms.
\end{proof}

Theorem~\ref{thm:strcospeccharact} simultaneously generalizes both constructions of strongly cospectral vertices presented in Theorem 9.1 and Lemma 9.3 of \cite{godsil2017strongly}.

We finish this section with a short lemma.

\begin{lemma} \label{lem:short}
	Let $G$ be a graph as in Figure~\ref{fig:rooted}. If $\theta$ is larger than the largest pole of $\alpha_1^G$, then $\alpha_i^{G_i}(\lambda) \neq \infty$ for all $i$ and all $\lambda \geq \theta$.
\end{lemma}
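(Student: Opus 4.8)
The plan is to induct on $n$, peeling off the decoration $G_1$ at each step. When $n=1$ the graph $G$ is the rooted graph $G_1$ itself, so $\alpha_1^G=\alpha_1^{G_1}$ and there is nothing to prove. For the inductive step, let $H$ be the decorated path obtained from $G$ by deleting $G_1$ --- that is, the rooted product of $P_{n-1}$ with $G_2,\dots,G_n$ --- so that, by the computation in the proof of Lemma~\ref{lem:alphas} (compare \eqref{eq:alphasfraction}),
\[
  \alpha_1^G = \alpha_1^{G_1} - \frac{1}{\alpha_2^H}.
\]
I would first locate the poles of $\alpha_1^G$ from this decomposition, then handle the index $i=1$ directly, and finally verify that $H$ satisfies the hypothesis of the lemma with the same $\theta$, so that the induction hypothesis takes care of the indices $i\in\{2,\dots,n\}$.

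The crux is that this subtraction cannot cancel poles. By Lemma~\ref{lemma:derivative_quotient}, both $\alpha_1^{G_1}$ and $\alpha_2^H$ have only simple zeros and simple poles and are increasing on each of their branches. An increasing rational function with a simple pole at $p$ behaves like $c/(x-p)+O(1)$ near $p$ with $c<0$; applying this to $\alpha_1^{G_1}$, and noting that at each zero $z$ of $\alpha_2^H$ the function $-1/\alpha_2^H$ has a simple pole with negative leading coefficient (as $\alpha_2^H$ crosses zero with positive slope there), we see that no two of these principal parts can cancel one another. Hence the poles of $\alpha_1^G$ are exactly the poles of $\alpha_1^{G_1}$ together with the zeros of $\alpha_2^H$. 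Writing $\theta_0$ for the largest pole of $\alpha_1^G$, it follows that every pole of $\alpha_1^{G_1}$ is at most $\theta_0<\theta$, so $\alpha_1^{G_1}(\lambda)\neq\infty$ for all $\lambda\geq\theta$ --- this settles $i=1$ --- and that every zero of $\alpha_2^H$ is at most $\theta_0<\theta$. To control the largest \emph{pole} of $\alpha_2^H$, I would use that $\alpha_2^H$ is increasing on each branch with $\alpha_2^H(x)\to+\infty$ as $x\to+\infty$ (because $\deg\phi^H=\deg\phi^{H\setminus 2}+1$): the branch of $\alpha_2^H$ unbounded to the right therefore contains a zero, so $\alpha_2^H$ has at least one zero and, if it has any pole at all, its largest pole is strictly smaller than its largest zero and hence smaller than $\theta$. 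Thus $H$ meets the hypothesis of the lemma with the same $\theta$, and the induction hypothesis yields $\alpha_i^{G_i}(\lambda)\neq\infty$ for every $\lambda\geq\theta$ and every $i\in\{2,\dots,n\}$, completing the induction.

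I expect the only genuinely delicate point to be the no-cancellation claim. That no \emph{new} pole of $\alpha_1^G$ is created is trivial, since a finite sum of functions that are finite at a point is finite there. That no pole is \emph{destroyed} is exactly where Lemma~\ref{lemma:derivative_quotient} is indispensable: it forces every principal part in play to have a negative leading coefficient, which makes cancellation impossible. Everything else --- the asymptotics of $\alpha_2^H$ at $+\infty$ and the ensuing comparison of its largest pole with its largest zero --- follows routinely from the monotonicity and branchwise surjectivity of these rational functions.
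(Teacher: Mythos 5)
Your proof is correct, and it is essentially the paper's own argument run in the opposite direction: the paper takes a minimal index whose tail function has a pole at some $\lambda \geq \theta$ and uses monotonicity/surjectivity to cascade a pole of the next function toward vertex $1$ at a larger value, whereas you peel off $G_1$ and push the pole bound down the path by induction. Both rest on the same continued-fraction step $\alpha_1^G = \alpha_1^{G_1} - 1/\alpha_2^{H}$ and on Lemma~\ref{lemma:derivative_quotient}, whose monotonicity forces negative leading coefficients at simple poles and hence the no-cancellation fact you isolate.
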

\begin{proof}
	Consider the graphs $G(k)$ as obtained upon removing $G_1,...,G_{k-1}$ from $G$. If the statement is false, then there is at least one $k$ so that $\alpha_{k}^{G(k)}(\lambda) = \infty$. Let $i$ be smallest such case. Note that $i \neq 1$ because $G(1) = G$. As $\alpha_{i}^{G(i)}(\lambda) = \infty$, at a larger value $\lambda'$ we have $\alpha_{i}^{G(i)}(\lambda') = 0$, and therefore
		\[
			\alpha_{i-1}^{G(i-1)}(\lambda') = \alpha_{i-1}^{G_{i-1}}(\lambda') - \frac{1}{\alpha_{i}^{G(i)}(\lambda')} = \infty,
		\]
		as both terms on the right are strictly increasing in their branches, leading to a contradiction.
\end{proof}

\section{Folding the symmetric decorated paths} \label{sec:support}

We assume vertices $1$ and $n$ are strongly cospectral, that is, for all eigenvalues $\theta$ in their eigenvalue support, if $E_\theta$ is the projection onto the eigenspace, then $E_\theta e_1 = \pm E_\theta e_n$. We will use $\Phi_{1n}^+(G)$ to denote the set of eigenvalues of $A(G)$ in their support for which $E_\theta e_1 = E_\theta e_n$, and have $\Phi_{1n}^-(G)$ analogously defined. We would like to describe these sets in terms of smaller graphs that, in a sense, behave like quotients. It will be necessary to distinguish between the cases where the number of vertices in the path is odd or even.

\begin{figure}[H]
\begin{center}
		\includegraphics{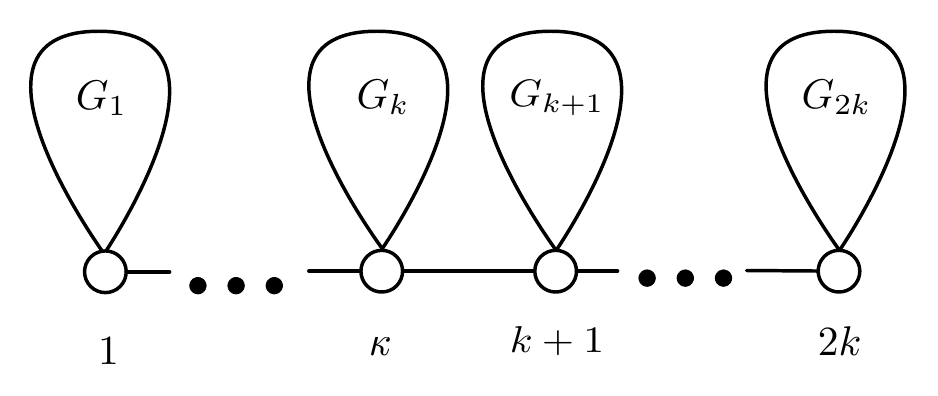}
		\caption{Rooted product $G$ of a path $P_{2k}$ with graphs $G_1$,...,$G_{2k}$.} \label{fig:rooted2n}
\end{center} 
\end{figure}

\begin{figure}[H]
\begin{center}
		\includegraphics{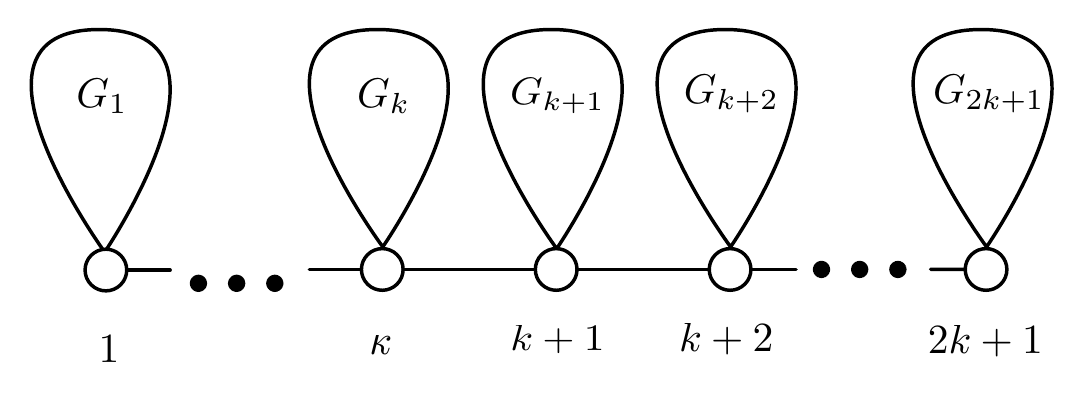}
		\caption{Rooted product $G$ of a path $P_{2k+1}$ with graphs $G_1$,...,$G_{2k+1}$.} \label{fig:rooted2n1}
\end{center} 
\end{figure}

If $G_i = G_{n+1-i}$, then there is an automorphism of $G$ that swaps these pairs, and the orbits of the subgroup generated by this reflection form an equitable partition. Thus, any eigenvector of $G$ is either constant on the classes, or orthogonal to their characteristic vectors (see \cite[Section 9.3]{GodsilRoyle}). Hence there are three types of eigenvectors: those constant and nonzero at $1$ and $n$, those of opposing sign, and those equal to zero. Assuming $1$ and $n$ are strongly cospectral, we can therefore describe the sets $\Phi_{1n}^\pm(G)$ as the eigenvalues in the support $1$ of the following graphs: 

\begin{figure}[H]
\begin{center}
		\includegraphics{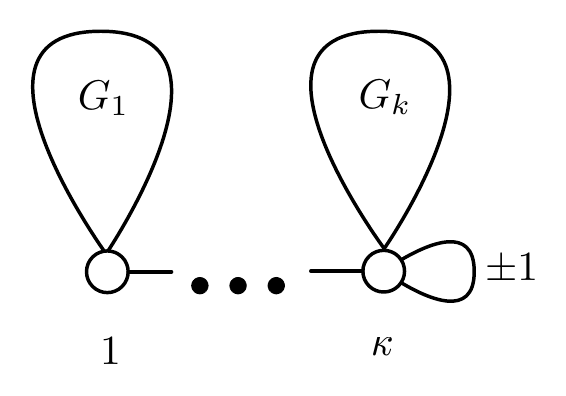}
		\caption{Graphs $G^{\pm}$, where the $\pm 1$ are weights in the loop. The eigenvalues in the support of $1$ are the eigenvalues in $\Phi_{1n}^\pm(G)$ for when $G$ is a rooted product over $P_n = P_{2k}$.} \label{fig:rooted2nsplits}
\end{center} 
\end{figure}

\begin{figure}[H]
\begin{center}
		\includegraphics{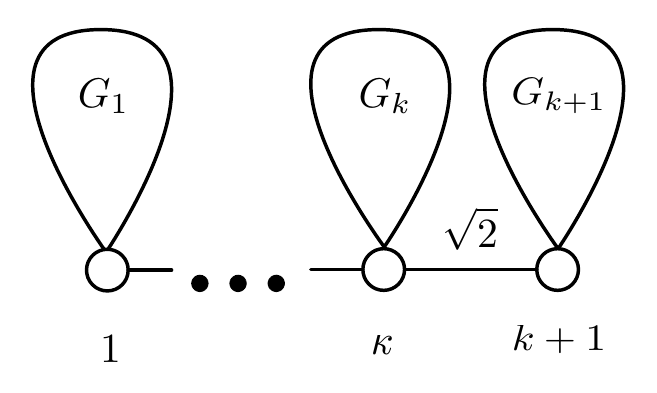} \quad
		\includegraphics{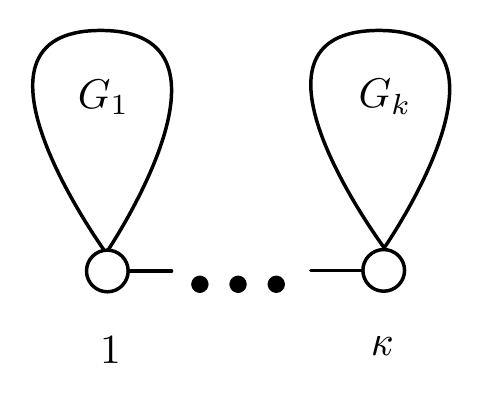}
		\caption{Graphs $G^{\pm}$ respectively. On $G^+$, the $\sqrt{2}$ is the weight of the edge. The eigenvalues in the support of $1$ are the eigenvalues in $\Phi_{1n}^\pm(G)$ for when $G$ is a rooted product over $P_n = P_{2k+1}$.} \label{fig:rooted2n1splits}
\end{center} 
\end{figure}

The main result in this section is that the hypothesis $G_i = G_{n+1-i}$ can be relaxed to $\alpha^{G_i}_i = \alpha^{G_{n+1-i}}_{n+1-i}$. Of course we can no longer directly use a combinatorial equitable partition. This result was obtained in \cite{CoutinhoGodsilJulianovanBommel} for the paths on $2$ or $3$ vertices, but here we use a different method to prove the general case. First we define a construction of a path with real weighted loops, based on a graph $G$ that is a rooted product on $P_n$:

\begin{figure}[H]
\begin{center}
		\includegraphics{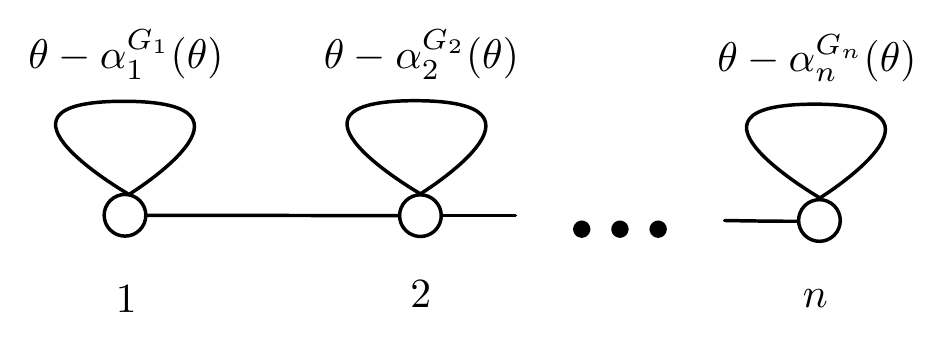}
		\caption{Given $\theta \in \Rds$, this is the graph $G[\theta]$ obtained from $G$ upon replacing the graphs $G_i$ by conveniently real weighted loops, as long as each $\alpha_i^{G_i}(\theta) \neq \infty$.} \label{fig:looped}
\end{center} 
\end{figure}

\begin{lemma} \label{lem:rootedtopath}
	Assume $G$ is as in Figures~\ref{fig:rooted2n} or \ref{fig:rooted2n1}. Assume that $1$ and $n$ are strongly cospectral. Then $\theta$ is an eigenvalue of $G$ in the support of $1$ if and only if $\theta$ is an eigenvalue in the support of $1$ in the graph $G[\theta]$.
\end{lemma}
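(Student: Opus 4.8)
The plan is to exploit the continued-fraction identity from Equation~\eqref{eq:alphasfraction} together with the observation that the rational function $\alpha_1^G$, restricted to a neighbourhood of a fixed $\theta$ in the eigenvalue support, depends on the graphs $G_i$ only through the \emph{values} $\alpha_i^{G_i}(\theta)$ and their behaviour at $\theta$. So first I would write, using \eqref{eq:alphasfraction},
\[
\alpha_1^G = \alpha^{G_1}_1 - \cfrac{1}{\alpha^{G_2}_2 - \cfrac{1}{\ddots\ - \cfrac{1}{\alpha^{G_n}_n}}},
\]
and note that by hypothesis $\theta$ is not a pole of any $\alpha_i^{G_i}$ (strong cospectrality of $1$ and $n$ forces this, by Theorem~\ref{thm:strcospeccharact}, for every $\theta$ in the support). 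Hence each $\alpha_i^{G_i}$ is analytic at $\theta$, with $\alpha_i^{G_i}(\theta)$ a finite real number, and by Lemma~\ref{lemma:derivative_quotient} it has derivative at least $1$ there. The graph $G[\theta]$ is built precisely so that its $i$-th decoration is a single loop of weight $w_i := \alpha_i^{G_i}(\theta)$; for a loop of weight $w$ at the vertex $i$ one has $\phi^{K_1\text{(loop }w)} = x - w$ and the vertex-deleted graph is empty, so the corresponding $\alpha$ for that decoration is exactly $x - w$, whose value at $\theta$ is $\theta - w_i = 0$ when... no: the loop weights should be chosen so that the value at $\theta$ matches, i.e. one takes the loop weight to be $\theta - \alpha_i^{G_i}(\theta)$ so that $\alpha$ of the decoration evaluated at $\theta$ equals $\alpha_i^{G_i}(\theta)$. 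I would state this normalization explicitly and then observe that the continued fraction \eqref{eq:alphasfraction} for $G[\theta]$, evaluated at the point $\theta$, is built from numbers $\alpha_i^{G_i}(\theta)$ exactly as the one for $G$ is.

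The next step is to turn ``same value at $\theta$'' into ``$\theta$ is a zero of one iff a zero of the other''. Here the delicate point is that the continued fraction involves divisions, so a naive substitution could be invalidated by intermediate poles. I would handle this exactly as in the proof of Theorem~\ref{thm:strcospeccharact}: define the tails
\[
\beta_k^G := \alpha^{G_k}_k - \cfrac{1}{\alpha^{G_{k+1}}_{k+1} - \cfrac{1}{\ddots - \cfrac{1}{\alpha^{G_n}_n}}},
\]
and the analogous $\beta_k^{G[\theta]}$, and prove by downward induction on $k$ that $\beta_k^G(\theta) = \beta_k^{G[\theta]}(\theta)$, \emph{with the convention $\infty = \infty$} adopted in the paragraph before Theorem~\ref{thm:strcospeccharact}. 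The base case $k = n$ is $\alpha_n^{G_n}(\theta) = \theta - w_n = \alpha^{(K_1,\text{loop }w_n)}_n(\theta)$, true by the choice of $w_n$. For the inductive step, if $\beta_{k+1}^G(\theta) = \beta_{k+1}^{G[\theta]}(\theta)$, then $-1/\beta_{k+1}^G(\theta) = -1/\beta_{k+1}^{G[\theta]}(\theta)$ as extended reals (both $0$, both $\infty$, or both equal and finite nonzero), and adding the common finite value $\alpha_k^{G_k}(\theta)$ — which is finite because $\theta$ is not a pole of $\alpha_k^{G_k}$ — preserves the equality. Since $\theta$ is in the support of $1$, $\beta_1^G(\theta) = \alpha_1^G(\theta) = 0$, hence $\alpha_1^{G[\theta]}(\theta) = \beta_1^{G[\theta]}(\theta) = 0$, i.e. $\theta$ is an eigenvalue of $G[\theta]$ in the support of $1$; and conversely. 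One must also check that $\alpha_1^{G[\theta]}$ is genuinely finite and nonzero-elsewhere enough that ``value $0$ at $\theta$'' means ``$\theta$ in support of $1$'' for $G[\theta]$ — but that is immediate since by construction $G[\theta]$ is a decorated path whose decorations have no pole at $\theta$, so $\alpha_1^{G[\theta]}$ is analytic at $\theta$ and vanishes there iff $\theta$ is an eigenvalue in the support of $1$ of $G[\theta]$ (again via \eqref{eq:offdiagonal}).

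The main obstacle is precisely the bookkeeping of the $\infty + \infty = \infty$ convention through the induction: one needs to be sure that whenever an intermediate tail $\beta_{k+1}$ has a pole at $\theta$, the same happens on both sides, and that the outer addition of $\alpha_k^{G_k}(\theta)$ — finite by the no-pole hypothesis, which is exactly where strong cospectrality (via Theorem~\ref{thm:strcospeccharact}) is used — cannot cancel that pole. This is the same mechanism already deployed in the proof of Theorem~\ref{thm:strcospeccharact}, using Lemma~\ref{lemma:derivative_quotient} to know each tail has only simple poles and is monotone on branches, so I would simply invoke that reasoning rather than redo it. A secondary, purely notational point is to record the correct loop weights ($\theta - \alpha_i^{G_i}(\theta)$, so that the decoration's $\alpha$ hits the target value at $\theta$) and to note this is well-defined exactly because $\alpha_i^{G_i}(\theta) \neq \infty$, which is the hypothesis displayed in Figure~\ref{fig:looped}.
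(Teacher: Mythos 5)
Your proposal is correct and follows essentially the same route as the paper: use strong cospectrality (via Theorem~\ref{thm:strcospeccharact}) to ensure $G[\theta]$ is well defined, note that the loop weights $\theta-\alpha_i^{G_i}(\theta)$ make each decoration's $\alpha$ agree with $\alpha_i^{G_i}$ at $x=\theta$, and compare the two continued fractions \eqref{eq:alphasfraction} at that point. The paper states this comparison in one line; your downward induction on the tails with the $\infty=\infty$ convention is just a more explicit version of the same step.
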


\begin{proof}
	The first thing to notice is that $H = G[\theta]$ is well-defined: the fact that $1$ and $n$ are strongly cospectral in $H$ implies that $\alpha_i^{G_i}(\theta) \neq \infty$ (Lemma~\ref{thm:strcospeccharact}). A number $\tau$ is an eigenvalue of $H$ in the support of $1$ if and only if $\alpha_1^H(\tau) = 0$, and $\alpha_1^H$ admits an expansion just like Equation~\eqref{eq:alphasfraction}, in which each $H_i$ is just a vertex with the loop attached. Noticing that $\alpha_i^{H_i}(x) = x-(\theta - \alpha_i^{G_i}(\theta))$, it follows that $\alpha_1^{G}(\theta) = 0$ if and only if $\alpha_1^H(\theta) = 0$.
\end{proof}

Vertices $1$ and $n$ in the paths described in Figure~\ref{fig:looped} are strongly cospectral as the path is mirror-symmetric (see \cite{VinetZhedanovHowTo}), and their support is the entire spectrum of the path. Once an eigenvector entry at $1$ (or $n$) is determined, the rest of the eigenvector follows, so it is easy to see the following lemma.

\begin{lemma} \label{lem:evencase}
	Let $G$ be as in Figure~\ref{fig:rooted2n}, and $G^\pm$ as in Figure~\ref{fig:rooted2nsplits}. Assume that $\alpha^{G_i}_i = \alpha^{G_{n+1-i}}_{n+1-i}$, and that $1$ and $n$ are strongly cospectral. The following are equivalent (choosing either $+$ or $-$ in each):
	\begin{enumerate}[(i)]
		\item The number $\theta$ belongs to $\Phi_{1n}^\pm(G)$.
		\item $G[\theta]$ is well defined and $\theta \in \Phi_{1n}^\pm(G[\theta])$.
		\item $G[\theta]$ is well defined and $\theta$ is an eigenvalue of $G^\pm[\theta]$ in the support of $1$. (in the definition of $G^\pm[\theta]$, add the weights of the two loops in vertex $k$.)
		\item $\theta$ is an eigenvalue of $G^\pm$ in the support of $1$. (equivalently, $\alpha_1^{G^\pm}(\theta) = 0$.)
	\end{enumerate}
\end{lemma}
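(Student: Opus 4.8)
The plan is to prove the chain of equivalences (i) $\Leftrightarrow$ (ii) $\Leftrightarrow$ (iii) $\Leftrightarrow$ (iv) in order, with the first equivalence being essentially a reformulation of Lemma~\ref{lem:rootedtopath} together with the sign refinement, and the last equivalence being the key "folding" step that upgrades the combinatorial symmetry $G_i = G_{n+1-i}$ to the weaker hypothesis $\alpha^{G_i}_i = \alpha^{G_{n+1-i}}_{n+1-i}$.

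\textbf{Steps (i) $\Leftrightarrow$ (ii).} First I would observe that the hypothesis $\alpha^{G_i}_i = \alpha^{G_{n+1-i}}_{n+1-i}$ gives, via Lemma~\ref{lem:alphas}, that $\alpha_1^G = \alpha_n^G$, and combined with strong cospectrality of $1$ and $n$ and Theorem~\ref{thm:strcospeccharact}, we get $\alpha_i^{G_i}(\theta)\neq\infty$ for every $\theta$ in the support of $1$ and every $i$, so $G[\theta]$ is well defined whenever $\theta\in\Phi_1(G)$. Lemma~\ref{lem:rootedtopath} already tells us $\theta\in\Phi_1(G)$ iff $\theta\in\Phi_1(G[\theta])$; what remains is to match the sign, i.e. to check that $E_\theta e_1 = +E_\theta e_n$ in $G$ iff the same holds in $G[\theta]$. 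The point is that the sign is determined by the relation between the eigenvector entry at $1$ and the entry at $n$, and this relation is governed entirely by the continued-fraction recursion along the path: starting from an eigenvector entry at vertex $1$ of the path spine and propagating through the three-term recurrence, each step uses only the value $\alpha_i^{G_i}(\theta)$ (the attached graph $G_i$ contributes exactly the scalar $\alpha_i^{G_i}(\theta)$ to the recurrence at the $i$th spine vertex, because the eigenvector restricted to $G_i\setminus i$ is forced once its value at the root is fixed). Since $G[\theta]$ is built precisely so that $\alpha_i^{G_i[\theta]}(\theta) = \alpha_i^{G_i}(\theta)$ (as in the proof of Lemma~\ref{lem:rootedtopath}, $\alpha_i^{H_i}(x) = x - (\theta - \alpha_i^{G_i}(\theta))$ evaluates at $x=\theta$ to $\alpha_i^{G_i}(\theta)$), the propagated entry at $n$ relative to the entry at $1$ is the same real number in both graphs, hence the sign agrees.

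\textbf{Steps (ii) $\Leftrightarrow$ (iii) $\Leftrightarrow$ (iv).} For (ii) $\Leftrightarrow$ (iii): the graph $G[\theta]$ is a genuine mirror-symmetric weighted path (the symmetry $\alpha^{G_i}_i = \alpha^{G_{n+1-i}}_{n+1-i}$ becomes an honest automorphism once each $G_i$ has been collapsed to a loop of weight $\theta - \alpha_i^{G_i}(\theta)$, since the loop weights are then symmetric). So here the classical equitable-partition folding of Figure~\ref{fig:rooted2nsplits} applies verbatim: the $\Phi_{1n}^\pm(G[\theta])$-eigenvectors are exactly the symmetric/antisymmetric ones, which restrict to eigenvectors of $G^\pm[\theta]$ supported at $1$, with the even case requiring that the two loops meeting at the central vertex $k$ be merged (their weights added), exactly as parenthetically noted. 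For (iii) $\Leftrightarrow$ (iv): apply Lemma~\ref{lem:rootedtopath} again, now to the rooted product $G^\pm$ over $P_k$ (whose $i$th attached graph is $G_i$ for $i<k$ and $G_k$ with an added $\pm1$ loop for $i=k$), which transforms "$\theta$ is an eigenvalue of $G^\pm$ in the support of $1$" into "$\theta$ is an eigenvalue of $G^\pm[\theta]$ in the support of $1$" — matching (iii) once one checks the loop weight $\theta - \alpha_k^{G_k \text{ with }\pm1\text{ loop}}(\theta)$ indeed equals the weight appearing in $G^\pm[\theta]$ with the central loops added, which is a one-line computation using $\alpha^{G_k\cup\{\pm\text{loop}\}}_k = \alpha^{G_k}_k \pm 1$ evaluated appropriately.

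\textbf{Main obstacle.} The routine analytic content (continued fractions, Lemma~\ref{lemma:derivative_quotient}) is all in hand; the delicate point is the sign-tracking in (i) $\Leftrightarrow$ (ii) and the bookkeeping that the central vertex's loop weight is handled consistently across the even versus odd cases. Concretely, the hard part will be arguing cleanly that the \emph{ratio} of eigenvector entries between vertices $1$ and $n$ depends only on the scalars $\alpha_i^{G_i}(\theta)$ and not on the internal structure of the $G_i$'s — this is intuitively clear from the three-term recurrence on the spine but must be stated so that it simultaneously yields membership in $\Phi_1$, the correct sign, and well-definedness of $G[\theta]$, all from the single hypothesis $\alpha^{G_i}_i = \alpha^{G_{n+1-i}}_{n+1-i}$ plus strong cospectrality. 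I expect the even case to be marginally fussier than the odd case because of the loop-merging at vertex $k$, and I would write that case out explicitly and leave the odd case to the reader, or vice versa.
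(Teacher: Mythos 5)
Your chain (i)$\Leftrightarrow$(ii)$\Leftrightarrow$(iii)$\Rightarrow$(iv) is essentially sound, and your spine-recurrence sign-tracking for (i)$\Leftrightarrow$(ii) is a legitimate, arguably more explicit, alternative to what the paper does. The genuine gap is the direction (iv)$\Rightarrow$(iii). You cannot simply ``apply Lemma~\ref{lem:rootedtopath} again'' to $G^\pm$: that lemma assumes the two end vertices of the decorated path \emph{in the graph it is applied to} are strongly cospectral, which is neither hypothesized nor generally true for $G^\pm$, and the content you would need from it --- that $G^\pm[\theta]$, equivalently $G[\theta]$, is well defined, i.e.\ $\alpha_i^{G_i}(\theta)\neq\infty$ for all $i\le k$ --- is exactly what statement (iii) asserts and what has to be proved. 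The only source of the no-pole condition anywhere in the lemma is strong cospectrality of $1$ and $n$ in $G$ via Theorem~\ref{thm:strcospeccharact}, and that only applies to $\theta$ in the eigenvalue support of $1$ in $G$; starting from (iv) you do not yet know $\theta$ lies in that support. This is precisely why the paper proves the lemma as a cycle, (ii)$\Leftrightarrow$(iii), (iii)$\Rightarrow$(iv), (iv)$\Rightarrow$(i), (i)$\Rightarrow$(ii): the step out of (iv) goes to (i) by an eigenvector-gluing construction, where the $\alpha$-symmetry is used to form the mirrored half (replace $G_i$ by $G_{n+1-i}$ in $G^\pm$), a $\theta$-eigenvector of each half nonzero at its outer end is taken, and the two are spliced across the middle edge of $P_{2k}$; this exhibits $\theta\in\Phi_1(G)$ \emph{without} knowing beforehand that $G[\theta]$ exists, and only then do strong cospectrality and the (i)--(iii) machinery deliver the no-pole condition and pin the sign. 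Your linear argument never gets from (iv) back into the equivalence class, so the lemma is not fully proved as outlined.

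Two smaller points, both repairable. First, your justification that ``the eigenvector restricted to $G_i\setminus i$ is forced once its value at the root is fixed'' is false when $\theta$ is an eigenvalue of $G_i\setminus i$, which $\alpha_i^{G_i}(\theta)\neq\infty$ does not exclude; what is true, and suffices, is that $\alpha_i^{G_i}(\theta)\neq\infty$ forces every $\theta$-eigenvector of $G_i\setminus i$ to be orthogonal to the neighbourhood vector of $i$ in $G_i$, whence $\sum_{j\sim i,\, j\in G_i} v_j = \bigl(\theta-\alpha_i^{G_i}(\theta)\bigr)v_i$ for any $\theta$-eigenvector $v$ of $G$, so the spine entries satisfy exactly the Jacobi recurrence of $G[\theta]$; simplicity of the eigenvalues of that weighted path then gives the sign correspondence. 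Second, the loop identity is $\alpha_k^{G_k\cup\{c\text{-loop}\}}=\alpha_k^{G_k}-c$, so the $\pm1$ loop shifts $\alpha$ by $\mp1$ (consistent with $\alpha_1^{G^\pm}=\alpha_1^{T_1}\mp1$ used later in the paper); your parenthetical has the sign reversed, though this does not affect the structure of the argument.
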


\begin{proof}
	Note that in principle we can only say from Lemma~\ref{lem:rootedtopath} that if $\theta$ is in the support of $1$ in $G$, then it is in the support of $1$ in $G[\theta]$. We will see in the end that the sign in $\Phi_{1n}^\pm(G)$ will correspond to the sign in $\theta \in \Phi_{1n}^\pm(G[\theta])$. The equivalence between (ii) and (iii) follows straight from considering the that the eigenvectors of $G[\theta]$ are symmetric or anti-symmetric about the centre of the path. Lemma~\ref{lem:rootedtopath} gives that (iii) $\implies$ (iv). If we assume (iv), we can use the symmetry of the $\alpha$'s and Equation~\eqref{eq:alphasfraction} to replace $G_i$ by $G_{n+1-i}$ in the definitions of $G^\pm$, and so (iv) implies (i) because we can easily construct the eigenvectors of $G$ by gluing eigenvectors for $G^\pm$ and for their modified versions, and from this construction, it follows that $\theta \in \Phi_{1n}^\pm(G)$ implies $\theta \in \Phi_{1n}^\pm(G[\theta])$, giving that (i) $\implies$ (ii) with the corresponding signs.
\end{proof}

For reference, we also state the lemma for the odd case, but the proof is analogous.

\begin{lemma} \label{lem:oddcase}
	Let $G$ be as in Figure~\ref{fig:rooted2n1}, and $G^\pm$ as in Figure~\ref{fig:rooted2n1splits}. Assume that $\alpha^{G_i}_i = \alpha^{G_{n+1-i}}_{n+1-i}$, and that $1$ and $n$ are strongly cospectral. The following are equivalent (choosing either $+$ or $-$ in each):
	\begin{enumerate}[(i)]
		\item The number $\theta$ belongs to $\Phi_{1n}^\pm(G)$.
		\item $G[\theta]$ is well defined and $\theta \in \Phi_{1n}^\pm(G[\theta])$.
		\item $G[\theta]$ is well defined and $\theta$ is an eigenvalue of $G^\pm[\theta]$ in the support of $1$. (in the definition of $G^\pm[\theta]$, add the weights of the two loops in vertex $k$.)
		\item $\theta$ is an eigenvalue of $G^\pm$ in the support of $1$. (equivalently, $\alpha_1^{G^\pm}(\theta) = 0$.)
	\end{enumerate}
\end{lemma}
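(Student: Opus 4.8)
The plan is to run exactly the cycle of implications used in the proof of Lemma~\ref{lem:evencase}, the only genuinely new ingredient being the combinatorics of folding a path of \emph{odd} order about its central vertex. As there, the opening observation is that strong cospectrality of $1$ and $n$ forces $\alpha_i^{G_i}(\theta)\neq\infty$ for every $i$ and every $\theta$ in the support of $1$ (Theorem~\ref{thm:strcospeccharact}), so $G[\theta]$ is well defined for such $\theta$; concretely, $G[\theta]$ replaces each $G_i$ by a single loop of weight $\theta-\alpha_i^{G_i}(\theta)$, and $\alpha_1^{G[\theta]}$ is then the continued fraction \eqref{eq:alphasfraction} with each $\alpha_i^{G_i}$ replaced by $x-(\theta-\alpha_i^{G_i}(\theta))$. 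I would then prove (ii)$\Leftrightarrow$(iii), (iii)$\Rightarrow$(iv), (iv)$\Rightarrow$(i) and (i)$\Rightarrow$(ii), carrying the $\pm$ sign coherently around the loop.

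For (ii)$\Leftrightarrow$(iii): here $G[\theta]$ is a weighted path on $2k+1$ vertices, mirror-symmetric about its central vertex $k+1$, so every eigenvector of $G[\theta]$ is symmetric or anti-symmetric about $k+1$. If $v$ is symmetric ($v_{k+1+j}=v_{k+1-j}$), then writing the eigenvalue equations at vertices $k$ and $k+1$ and rescaling the central coordinate by $\sqrt2$ shows that $(v_1,\dots,v_k,\sqrt2\,v_{k+1})$ is an eigenvector of $G^+[\theta]$, the weighted path on $k+1$ vertices whose central edge $\{k,k+1\}$ carries weight $\sqrt2$; conversely an eigenvector of $G^+[\theta]$ unfolds to a symmetric eigenvector of $G[\theta]$. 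If $v$ is anti-symmetric then $v_{k+1}=0$, so its restriction to $\{1,\dots,k\}$ is an eigenvector of $G^-[\theta]$, the weighted path on $k$ vertices obtained by discarding the central vertex, and conversely. Since vertex $1$ lies in the left half in both cases, this identifies the eigenvalue support of $1$ in $G^\pm[\theta]$ with $\Phi_{1n}^\pm(G[\theta])$, matching Figure~\ref{fig:rooted2n1splits}.

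For the remaining arrows I would transcribe the even-order argument. Lemma~\ref{lem:rootedtopath}, applied to $G^\pm$ — where its content is the continued-fraction identity \eqref{eq:alphasfraction} together with the finiteness of the loop weights guaranteed by hypothesis~(iii) — gives (iii)$\Rightarrow$(iv). Conversely, from $\alpha_1^{G^\pm}(\theta)=0$ I can use the symmetry $\alpha^{G_i}_i=\alpha^{G_{n+1-i}}_{n+1-i}$ and \eqref{eq:alphasfraction} to rewrite $G^\pm$ with each $G_i$ replaced by $G_{n+1-i}$ without changing $\alpha_1$, and then glue an eigenvector of $G^+$ (resp. $G^-$) to a mirrored eigenvector of its modified version — inserting a $0$ at the central vertex, and on all of $G_{k+1}$, in the $-$ case — to obtain a symmetric (resp. anti-symmetric) eigenvector of $G$ that is nonzero at $1$; this gives (iv)$\Rightarrow$(i), and reading the gluing backwards, with the same central-coordinate bookkeeping as in (ii)$\Leftrightarrow$(iii), gives (i)$\Rightarrow$(ii) with matching signs, closing the cycle.

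The step that demands the most care is the central-vertex bookkeeping, which genuinely differs from the even case: there, folding puts an extra $\pm1$ loop on vertex $k$, whereas here the fold defining $G^+$ produces a $\sqrt2$-weighted edge while the fold defining $G^-$ deletes a vertex. One must check (a) that the $\sqrt2$ rescaling is precisely what turns the $2v_k$ term of the eigenvalue equation at $k+1$ into equal weights on the two sides of the fold, and (b) that the anti-symmetric eigenvectors of $G$ supported inside $G_{k+1}$ — which exist but vanish at vertex $1$ — are legitimately excluded from $\Phi_{1n}^-(G)$, so that deleting the central vertex in $G^-$ loses no eigenvalue of the support of $1$. Everything else is a line-by-line translation of the proof of Lemma~\ref{lem:evencase}.
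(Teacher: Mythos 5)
Your proposal is correct and follows exactly the route the paper intends: the paper's own ``proof'' of Lemma~\ref{lem:oddcase} is literally the remark that it is analogous to Lemma~\ref{lem:evencase}, and you have filled in that analogy with the right cycle of implications, the right treatment of the central vertex, and the right resolution of the two subtleties you flag. One small correction: the symmetric eigenvector $v$ of $G[\theta]$ folds to $(v_1,\dots,v_k,v_{k+1}/\sqrt2)$, not $(v_1,\dots,v_k,\sqrt2\,v_{k+1})$ --- with your scaling the equations at $k$ and $k+1$ produce weights $1/\sqrt2$ and $2\sqrt2$ rather than $\sqrt2$ on both sides of the fold --- but since the first coordinate is untouched either way, this does not affect the identification of the supports.
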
 \qed

\section{Spectrum of symmetric decorated paths} \label{sec:main}

Our goal in this section is to prove Theorem~\ref{thm:main}. The following is our main lemma for the odd length case.

\begin{lemma} \label{lem:main_lemma}
Consider functions $\alpha(x)$ and $\beta(x)$ both of the form 
\[
x - \tau_0 - \sum_m \frac{\lambda_m}{x - \tau_m},
\] 
with $\tau_m$ and $\lambda_m$ real numbers given by the choice of the function, $\lambda_m > 0$. Assume either $\alpha$ or $\beta$ have at least one pole. Let $\lambda > 0$. Given $\theta \in \Rds$, if $\beta(\theta) = 0$ and $\alpha(\theta) \neq \infty$, then exists $\theta' \in (\theta - \sqrt{\lambda}, \theta + \sqrt{\lambda})$ such that $\beta(\theta') \neq 0, \infty$ and $\alpha(\theta') - \frac{\lambda}{\beta(\theta')} = 0$.
\end{lemma}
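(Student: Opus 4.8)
The plan is to study the single rational function $\gamma = \alpha - \lambda/\beta$ and to show it has a zero within distance $\sqrt{\lambda}$ of $\theta$. First I would record its shape: writing $\alpha'(x) = 1 + \sum_m \lambda_m/(x-\tau_m)^2 \ge 1$ (and likewise $\beta' \ge 1$), one gets $\gamma' = \alpha' + \lambda\beta'/\beta^2 > 0$ wherever defined, so $\gamma$ is strictly increasing on each of its branches; its poles are exactly the poles of $\alpha$ together with the zeros of $\beta$, and at a pole of $\beta$ the term $\lambda/\beta$ is analytic with value $0$, so $\gamma$ extends analytically there with value equal to $\alpha$ at that point. On each branch $\gamma$ runs from $-\infty$ to $+\infty$ and hence has a unique zero. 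Since $\beta(\theta)=0$ is a simple zero and $\alpha(\theta)\ne\infty$, the point $\theta$ is a simple pole of $\gamma$, with $\gamma\to+\infty$ on its left and $\gamma\to-\infty$ on its right; let $z^- < \theta < z^+$ be the zeros of $\gamma$ on the two branches flanking $\theta$. It suffices to prove $z^- > \theta - \sqrt{\lambda}$ or $z^+ < \theta + \sqrt{\lambda}$, and then to check that the zero we keep is not a pole of $\beta$ (it is automatically not a zero of $\beta$, being interior to a branch of $\gamma$).

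For the main estimate I would argue by contradiction, assuming $z^- \le \theta - \sqrt{\lambda}$ and $z^+ \ge \theta + \sqrt{\lambda}$. Since $z^+$ lies strictly inside the branch of $\gamma$ issuing from $\theta$, that branch — hence $\gamma$ — has no pole in $(\theta, \theta+\sqrt{\lambda}]$, and symmetrically none in $[\theta-\sqrt{\lambda}, \theta)$. As poles of $\alpha$ and zeros of $\beta$ are poles of $\gamma$, and $\alpha(\theta)\ne\infty$, it follows that $\alpha$ is continuous on $[\theta-\sqrt{\lambda}, \theta+\sqrt{\lambda}]$, that $\theta$ is the only zero of $\beta$ there, and hence that $\beta$ has at most one pole in each of $(\theta, \theta+\sqrt{\lambda}]$ and $[\theta-\sqrt{\lambda}, \theta)$ (two poles of $\beta$ would force a zero between them). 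Monotonicity of $\gamma$ on the flanking branches gives $\gamma(\theta+\sqrt{\lambda}) \le 0 \le \gamma(\theta-\sqrt{\lambda})$. On the other hand $\alpha'\ge1$ yields $\alpha(\theta+\sqrt{\lambda}) - \alpha(\theta-\sqrt{\lambda}) \ge 2\sqrt{\lambda}$; and near $\theta$, where $\beta(\theta)=0$ and $\beta'\ge1$, either $\beta$ has no pole in $(\theta,\theta+\sqrt{\lambda}]$, whence $\beta(\theta+\sqrt{\lambda}) \ge \sqrt{\lambda}$, or it has one, whence $\beta$ has changed sign and $\beta(\theta+\sqrt{\lambda}) < 0$ (or $=\infty$); either way $-\lambda/\beta(\theta+\sqrt{\lambda}) \ge -\sqrt{\lambda}$, and symmetrically $-\lambda/\beta(\theta-\sqrt{\lambda}) \le \sqrt{\lambda}$. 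Summing, $\gamma(\theta+\sqrt{\lambda}) - \gamma(\theta-\sqrt{\lambda}) \ge 2\sqrt{\lambda} - \sqrt{\lambda} - \sqrt{\lambda} = 0$, and this is strict unless both $\alpha$ and $\beta$ have no pole at all (since $\alpha'(x) = 1 + \sum_m \lambda_m/(x-\tau_m)^2 > 1$ once a pole is present, and likewise for $\beta$); the strict inequality contradicts $\gamma(\theta+\sqrt{\lambda}) \le \gamma(\theta-\sqrt{\lambda})$, while the non-strict case contradicts the standing hypothesis that $\alpha$ or $\beta$ has a pole.

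It remains to ensure the retained zero is not a pole of $\beta$. At most one of $z^\pm$ can be one: a pole of $\beta$ is a zero of $\gamma$ only if $\alpha$ vanishes there, and $\alpha$ is strictly increasing, so $z^+$ a pole of $\beta$ would force $\alpha(\theta) < 0$ while $z^-$ a pole of $\beta$ would force $\alpha(\theta) > 0$. If, say, $z^+$ is a pole of $\beta$, then $\alpha(z^+)=0$, hence — unless $z^-$ already exceeds $\theta - \sqrt{\lambda}$, in which case we are done — $\alpha$ is continuous on $[\theta-\sqrt{\lambda}, z^+]$ with $\alpha(\theta-\sqrt{\lambda}) \le \alpha(\theta) - \sqrt{\lambda} < -\sqrt{\lambda}$; combined with $-\lambda/\beta(\theta-\sqrt{\lambda}) \le \sqrt{\lambda}$ (the same case analysis on a possible pole of $\beta$ in $(\theta-\sqrt{\lambda},\theta)$) this gives $\gamma(\theta-\sqrt{\lambda}) < 0$, so $z^- > \theta - \sqrt{\lambda}$, and $z^-$ is not a pole of $\beta$ by the sign remark. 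Taking $\theta'$ to be $z^-$ or $z^+$ accordingly completes the argument.

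The step I expect to be most delicate is the bookkeeping around poles of $\beta$ that fall inside $(\theta-\sqrt{\lambda},\theta+\sqrt{\lambda})$: such a pole does not interrupt $\gamma$ (the singularity of $\lambda/\beta$ is removable) but it flips the sign of $\beta$, and one must verify that every inequality keeps the correct orientation and that the final inequality is strict precisely because at least one of $\alpha,\beta$ has a pole — this is exactly where that hypothesis, and the reason the constant is $\sqrt{\lambda}$ rather than anything smaller, enter the picture.
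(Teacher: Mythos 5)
Your proof is correct, but it is organized quite differently from the paper's. The paper works with the product $f=\alpha\beta-\lambda$, notes $f(\theta)=-\lambda<0$, chooses a side according to the sign of $\alpha(\theta)$, and uses $\alpha',\beta'\geq 1$ (strict once a pole is present) to get $f(\theta\pm\epsilon)>\epsilon^2-\lambda$ up to the nearest pole, after which the intermediate value theorem finishes on an interval that by construction contains no pole of $\beta$, so the conditions $\beta(\theta')\neq 0,\infty$ come for free. You instead work with the quotient $\gamma=\alpha-\lambda/\beta$ itself, exploit its Herglotz branch structure (poles at poles of $\alpha$ and zeros of $\beta$, removable singularities at poles of $\beta$, a unique zero per branch), and run a two-sided contradiction comparing $\gamma(\theta-\sqrt{\lambda})$ with $\gamma(\theta+\sqrt{\lambda})$: the gain $\geq 2\sqrt{\lambda}$ from $\alpha$ beats the combined loss $\leq 2\sqrt{\lambda}$ from $-\lambda/\beta$ at the two endpoints, with strictness supplied exactly by the hypothesis that $\alpha$ or $\beta$ has a pole. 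The underlying quantitative input is the same in both arguments, and your case analysis is complete: you correctly observe that at most one of the flanking zeros $z^{\pm}$ can be a pole of $\beta$ (by the sign of $\alpha(\theta)$), and your final paragraph shows that if the in-range zero were such a pole the other flanking zero is forced into range anyway. What the paper's version buys is brevity and an automatic avoidance of poles of $\beta$; what yours buys is a cleaner global picture of where the relevant zeros of $\alpha-\lambda/\beta$ sit relative to $\theta$, which also makes transparent why the constant $\sqrt{\lambda}$ is the right threshold.
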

\begin{proof}
    Consider $f(x) := \alpha(x)\beta(x) - \lambda$. As $\alpha(\theta) \neq \infty$ and $\beta(\theta) = 0$, we have that $f(\theta) = - \lambda$.
    
    Among the poles of $\alpha$ or $\beta$, let $\theta + d_+$ and $\theta - d_-$ be the first poles after and before $\theta$, respectively. It might be the case that $\theta + d_+ = \infty$ or $\theta - d_- = - \infty$. Note that $\alpha$, $\beta$ and $f$ are continuous at $(\theta - d_0, \theta + d_+)$. It is straightforward to check that $\alpha', \beta' \geq 1$ at $(\theta - d_-, \theta + d_+)$, implying that
\[
\forall \epsilon \in (0, d_+), \text{ we have} \quad 
\begin{cases}
\alpha(\theta + \epsilon) \geq \alpha(\theta) + \epsilon, \\
\beta(\theta + \epsilon) \geq \beta(\theta) + \epsilon = \epsilon;
\end{cases}
\]
and
\[
\forall \epsilon \in (0, d_-), \text{ we have} \quad \begin{cases}
\alpha(\theta - \epsilon) \leq \alpha(\theta) - \epsilon,  \\
\beta(\theta - \epsilon) \leq \beta(\theta) - \epsilon = - \epsilon.
\end{cases}
\]

As either $\alpha(x)$ or $\beta(x)$ has at least one pole, the derivative of at least one of them will be strictly greater than $1$, so at least one of the inequalities in each pair is strict. We have two cases:

\begin{enumerate}[(i)]
	\item If $\alpha(\theta) \geq 0$, then, $\forall \epsilon \in (0, d_+)$,
	\[
	f(\theta + \epsilon) > (\alpha(\theta) + \epsilon) \epsilon - \lambda \geq \epsilon^2 - \lambda.
	\]
	\item If $\alpha(\theta) \leq 0$, then, $\forall \epsilon \in (0, d_-)$,
	\[
	f(\theta - \epsilon) > (-\alpha(\theta) + \epsilon) \epsilon - \lambda \geq \epsilon^2 - \lambda
	\]
\end{enumerate}

We also note that $\lim_{\epsilon \to d_+} f(\theta + \epsilon) > 0$ and $\lim_{\epsilon \to d_-} f(\theta - \epsilon) > 0$.

Therefore, in either case, and because $f(\theta) < 0$, if $\mu = \min\{\epsilon,\sqrt{\lambda}\}$, we find that there is $\theta' \in (\theta - \mu, \theta + \mu)$ with $f(\theta') = 0$ and $\beta(\theta') \neq \infty$. Because the roots and poles of $\beta$ intercalate, and $\beta(\theta) = 0$, we also conclude that $\beta(\theta') \neq 0$.
\end{proof}

We restate Theorem~\ref{thm:main} below for your convenience, followed by its proof.

\begin{theorem*}
	Let $G$ be a rooted graph as in Figure~\ref{fig:rooted}, and assume $G$ is neither $P_2$ nor $P_3$. Assume the following two conditions hold.
	\begin{enumerate}[(1)]
		\item For all $i \in \{1,...,n\}$,
	\[
		\alpha^{G_i}_i = \alpha^{G_{n+1-i}}_{n+1-i} \ ;
	\]
	\item For all $\theta$ so that $\alpha_1^G(\theta) = 0$, and for all vertices $i$, we have $\alpha_i^{G_i}(\theta) \neq\infty$.
	\end{enumerate}
	
	\noindent
	Then there are two distinct eigenvalues $\lambda$ and $\mu$ of $A(G)$, both of which contain an eigenvector with a non-zero entry at vertex $1$, and so that $|\lambda - \mu| < \sqrt{2}$ if $n$ is odd, and $|\lambda - \mu| \leq 1$ if $n$ is even.
\end{theorem*}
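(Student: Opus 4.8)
The plan is to reduce everything to the analysis of a single rational function. Hypothesis~(1) gives $\alpha_1^G=\alpha_n^G$ by Lemma~\ref{lem:alphas}, and then hypothesis~(2) is precisely the condition of Theorem~\ref{thm:strcospeccharact} ensuring that $1$ and $n$ are strongly cospectral. Granting that, Lemmas~\ref{lem:evencase} and~\ref{lem:oddcase} give $\Phi_1(G)=\Phi_{1n}^+(G)\sqcup\Phi_{1n}^-(G)$, where each piece is the set of eigenvalues in the support of vertex $1$ of the folded weighted graph $G^+$, respectively $G^-$; so it suffices to exhibit two nearby numbers, one a zero of $\alpha_1^{G^+}$ and one a zero of $\alpha_1^{G^-}$, as the two sets are disjoint. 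To make a common object visible, I would expand $\alpha_1^{G^\pm}$ via~\eqref{eq:alphasfraction} along the half-path $G_1,\dots,G_k$, collapsing the outer blocks $G_1,\dots,G_{k-1}$ into a single M\"obius transformation $\Psi_k(s)=\frac{a_ks+b_k}{c_ks+d_k}$ (with $a_kd_k-b_kc_k=1$) and setting $\tilde v:=\alpha_k^{G_k}+b_k/a_k$, so that the sole difference between $\alpha_1^{G^+}$ and $\alpha_1^{G^-}$ sits in one term inserted at the centre. Using Lemma~\ref{lemma:derivative_quotient} and the identity $b_k/a_k=-1/\alpha_{k-1}^{P'}$, where $P'$ is the decorated path on $G_1,\dots,G_{k-1}$ (so $b_k/a_k$ contributes only negative residues), one checks that $\tilde v$ is again of the form $x-\tau_0-\sum_m\lambda_m/(x-\tau_m)$ with every $\lambda_m>0$; in particular $\tilde v$ has derivative at least $1$ on each of its branches, on each of which it runs from $-\infty$ to $+\infty$.

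For the odd case $n=2k+1$, the weight $\sqrt2$ on the central edge of $G^+$ makes $\{\theta:\tilde v(\theta)\,\alpha_{k+1}^{G_{k+1}}(\theta)=2\}\subseteq\Phi_{1n}^+(G)$, while $\{\theta:\tilde v(\theta)=0\}\subseteq\Phi_{1n}^-(G)$. I would pick $\theta_-$ with $\tilde v(\theta_-)=0$ (such a point exists, there being one per branch of $\tilde v$), note $\theta_-\in\Phi_1(G)$ so that $\alpha_{k+1}^{G_{k+1}}(\theta_-)\neq\infty$ by hypothesis~(2), and apply Lemma~\ref{lem:main_lemma} with $\alpha=\alpha_{k+1}^{G_{k+1}}$, $\beta=\tilde v$, $\lambda=2$. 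This produces $\theta_+$ with $\tilde v(\theta_+)\neq 0,\infty$ and $\alpha_{k+1}^{G_{k+1}}(\theta_+)-2/\tilde v(\theta_+)=0$, hence $\theta_+\in\Phi_{1n}^+(G)$ and $|\theta_+-\theta_-|<\sqrt2$. The inequality is strict exactly because one of $\tilde v$, $\alpha_{k+1}^{G_{k+1}}$ has a pole unless $G=P_3$; and for $P_3$ the gap really equals $\sqrt2$, which is why it must be excluded. Finally $\theta_+\neq\theta_-$ since $\Phi_{1n}^+(G)$ and $\Phi_{1n}^-(G)$ are disjoint.

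For the even case $n=2k$, the folded graphs carry a loop of weight $\pm1$ at the centre, and now $\{\tilde v=1\}\subseteq\Phi_{1n}^+(G)$ and $\{\tilde v=-1\}\subseteq\Phi_{1n}^-(G)$. Because $\tilde v$ is increasing from $-\infty$ to $+\infty$ across each of its branches, the set $\{x:-1<\tilde v(x)<1\}$ is a disjoint union of bounded intervals, exactly one per branch, each running from a point of $\Phi_{1n}^-(G)$ up to a point of $\Phi_{1n}^+(G)$. A purely local estimate only bounds the shortest such interval by $2$; to sharpen this to $1$ I would invoke the measure-preservation property of functions of this form (the theorem of P\'olya--Szeg\"o), which forces the total length of $\{-1<\tilde v<1\}$ to be exactly $1-(-1)=2$. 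Since $\tilde v$ has at least one pole whenever $G\neq P_2$, it has at least two branches, hence at least two of these intervals, and as their lengths sum to $2$ one of them has length at most $1$; its two endpoints are the required pair, distinct because $\tilde v$ takes the values $1$ and $-1$ there. For $G=P_2$ there is a single branch, a single interval of length $2$, and again this is the excluded boundary case.

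The main obstacle is the bookkeeping behind the inclusions above: one must check that the numbers produced are not accidental poles of $\Psi_k$, nor points where $\alpha_k^{G_k}$ and $b_k/a_k$ blow up together, so that they are genuine eigenvalues of $A(G)$ carrying an eigenvector nonzero at vertex $1$ --- this is exactly where hypothesis~(2), equivalently strong cospectrality, is used, ruling out such collisions in the spirit of the proof of Theorem~\ref{thm:strcospeccharact}. The second delicate point, peculiar to the even case, is verifying that $\tilde v$ meets the hypotheses of the P\'olya--Szeg\"o theorem (real, finitely many simple poles, leading term $x$ at $\pm\infty$); this ergodic-theoretic input is the genuinely novel ingredient of the proof, whereas in the odd case Lemma~\ref{lem:main_lemma} already is the crux.
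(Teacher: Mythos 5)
Your overall architecture is the paper's: fold $G$ into $G^\pm$ via Lemmas~\ref{lem:evencase} and~\ref{lem:oddcase}, expand along the half-path so that your $\tilde v$ is nothing but $\alpha_k^{G^-}$ (the M\"obius bookkeeping just reproduces Equation~\eqref{eq:alphasfraction}), then apply Lemma~\ref{lem:main_lemma} with $\lambda=2$ in the odd case and the P\'olya--Szeg\"o measure identity in the even case. The gap is in the inclusions you take for granted: $\{\tilde v=0\}\subseteq\Phi_{1n}^-(G)$ and $\{\tilde v\,\alpha_{k+1}^{G_{k+1}}=2\}\subseteq\Phi_{1n}^+(G)$ in the odd case, and $\{\tilde v=1\}\subseteq\Phi_{1n}^+(G)$, $\{\tilde v=-1\}\subseteq\Phi_{1n}^-(G)$ in the even case. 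These sets consist of eigenvalues of the folded graph supported at the \emph{central} vertex $k$ (or $k+1$), whereas by Lemmas~\ref{lem:evencase}/\ref{lem:oddcase} membership in $\Phi_{1n}^\pm(G)$ requires support at vertex $1$, i.e.\ a zero of $\alpha_1^{G^\pm}$. A decorated path is not a path: an eigenvalue supported at $k$ need not be supported at $1$ (already for $K_{1,3}$ viewed as a decorated $P_2$, the eigenvalue $0$ is supported at a leaf but not at the centre). Your appeal to hypothesis~(2)/strong cospectrality cannot repair this, because (2) only speaks about points already known to lie in $\Phi_1(G)$ --- which is exactly what you assert without proof when you write ``note $\theta_-\in\Phi_1(G)$'' for an \emph{arbitrary} zero $\theta_-$ of $\tilde v$ (``one per branch'').

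The paper's fix is the choice of extreme zeros: taking $\theta$ the \emph{largest} zero of $\alpha_k^{G^-}$ (resp.\ $\alpha_k^{G^+}$), both $\theta$ and the nearby point $\theta'$ produced by Lemma~\ref{lem:main_lemma} lie to the right of every pole of the continued fraction, so Lemma~\ref{lem:short} guarantees $\alpha_i^{G_i}\neq\infty$ there, and the correspondence of Lemma~\ref{lem:rootedtopath} with the weighted path $G[\theta]$ (whose end vertices have full support) transfers ``zero of $\alpha_k$'' into ``zero of $\alpha_1$''; the smallest zero is handled by the mirror argument. In the even case the gap is compounded: you select the \emph{shortest} interval of $\{-1<\tilde v<1\}$, whose endpoints you cannot certify as eigenvalues supported at $1$; the paper certifies only the two \emph{extreme} intervals and still wins, because P\'olya--Szeg\"o bounds the total length of \emph{all} intervals by $2$, hence one of those two certified intervals already has length at most $1$. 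With the extreme-zero selection inserted and the shortest-interval step replaced accordingly, your argument becomes the paper's proof.
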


\begin{proof}
	We split the proof into two cases depending on the parity of the length of the path.

\begin{center}
	\textsc{odd length}	
\end{center}
	
	Recall the definitions of $G^\pm$ from Figure~\ref{fig:rooted2n1splits}. We know from Lemma~\ref{lem:oddcase} that $\Phi_{1n}^\pm$ are the zeros of $\alpha_1^{G^\pm}$.
	
	Let $\alpha = \alpha_{k+1}^{G_{k+1}}$ and $\beta = \alpha_{k}^{G^-}$. We want to apply Lemma~\ref{lem:main_lemma} for these choices. It is immediate to check they are in desired form, and we first assume either $\alpha$ or $\beta$ has at least one pole. 
	
	Let $\theta$ be the largest zero of $\beta$, and thus also a zero of $\alpha_{1}^{G^-}$, therefore belonging to $\Phi_{1n}^-$. Note that $\alpha(\theta) \neq \infty$ because the vertices $1$ and $n$ are strongly cospectral (Theorem~\ref{thm:strcospeccharact}). Thus, choosing $\lambda = 2$, we find that there is $\theta' \in (\theta-\sqrt{2},\theta+\sqrt{2})$ such that $\beta(\theta') \neq 0,\infty$, and $\alpha(\theta')-2/\beta(\theta') = 0$. Similarly to our derivation of Equation~\eqref{eq:alphasfraction}, it follows from Equation~\eqref{eq:schwenk} that
	\[
		\alpha - \frac{2}{\beta} = \alpha_{k+1}^{G^+}.
	\] 
	Thus $\alpha_{k+1}^{G^+}(\theta') = 0$, but we wanted $\alpha_{1}^{G^+}(\theta') = 0$. To see this, note that as $\theta$ is the largest zero of $\beta$, then $\theta'$ is larger than the largest pole of $\beta$. This is because $\beta$ has no pole to the right of $\theta$ and the lemma guarantees $\theta'$ is to the right of the closest pole to the left of $\theta$. Thus, $\alpha_i^{G_i}(\theta') \neq\infty$ for all $i \in \{1,...,k\}$ (by Lemma~\ref{lem:short}), and also $\alpha_{k+1}^{G^{k+1}}(\theta') \neq \infty$ otherwise $\alpha_{k+1}^{G^+}(\theta') = \infty$. This is actually enough for us to apply the correspondence described in Lemma~\ref{lem:rootedtopath}, and as the extreme vertices of paths contain all eigenvalues in their support, it follows that $\alpha_{1}^{G^+}(\theta') = 0$.
	
	The case remaining is when both $\alpha$ and $\beta$ have no poles, which corresponds precisely to $P_3$. 
	
	\begin{center}
	\textsc{even length}	
	\end{center}
	
	Here we do not need Lemma~\ref{lem:main_lemma}, but the proof is not elementary. Let $G$ be as in Figure~\ref{fig:rooted2n}, and $G^\pm$ as in Figure~\ref{fig:rooted2nsplits}.
	
	From Lemma~\ref{lem:evencase}, $\theta \in \Phi_{1n}^\pm$ if and only if $\alpha_{1}^{G^\pm}(\theta) = 0$. However what we know is that $\alpha_k^{G^-} - 2 = \alpha_k^{G^+}$. So we would like to find zeros of $\alpha_k^{G^\pm}$ nearby which are also zeros of $\alpha_{1}^{G^\pm}$. First note that unless $k = 1$ and $G_1 = K_1$ (and $G$ is $P_2$), then $\alpha_k^{G^+}$ has at least two zeros. 
	
	We proceed as in the previous case, selecting $\theta$ the largest zero of $\alpha_k^{G^+}$. From Lemma~\ref{lem:short}, $\alpha_i^{G_i}(\theta) \neq \infty$, and thus the correspondence of Lemma~\ref{lem:rootedtopath} gives that $\theta$ is also a zero of $\alpha_1^{G^+}$. Note that, $\alpha_k^{G^+}$ and $\alpha_k^{G^-}$ have the same poles, so the closest zero of $\alpha_k^{G^-}$ to $\theta$, say $\theta'$, is larger than all poles of $\alpha_k^{G^-}$. So $\alpha_i^{G_i}(\theta') \neq \infty$, and we conclude that $\alpha_1^{G^-}(\theta') = 0$.
	
	If we choose $\theta$ the smallest zero of $\alpha_k^{G^+}$, then Lemma~\ref{lem:short} and Lemma~\ref{lem:rootedtopath} apply again, and the fact that the derivative is $\geq 1$ gives that there is $\theta' \in (\theta-2,\theta)$ with $\alpha_1^{G^+}(\theta) = 0$ and $\alpha_1^{G^-}(\theta') = 0$
	
	From a result due to Pólya and Szegö about functions that preserve the Lebesgue measure (see the unique unnumbered theorem in the paper \cite{letac1977functions} by Letac), if $\alpha$ is a function of the form described in Lemma~\ref{lem:main_lemma}, and if $\mu$ is the Lebesgue measure, then for any measurable subset $E$, it follows that
	\[
		\mu(E) = \mu(\alpha^{-1}(E)).
	\] 
	So we take $\theta_1$ and $\theta_2$ the smallest and largest zeros of $\alpha_1^{G^+}$ (and also $\alpha_k^{G^+}$), and we know that there are $\theta_1' \in (\theta_1-2,\theta_1)$ and $\theta_2' \in (\theta_2-2,\theta_2)$ which are the smallest and largest zeros of $\alpha_1^{G^-}$ (and also $\alpha_k^{G^-}$). Considering now the function
	\[
		\alpha = \alpha_k^{G^+} + 1 = \alpha_k^{G^-} - 1,
	\]
	and the set $E = (-1,1)$, the theorem in \cite{letac1977functions} gives that
	\[
		2 = \mu(E) = \mu(\alpha^{-1}(E)) \geq \mu((\theta_1',\theta_1)) + \mu((\theta_2',\theta_2)) = (\theta_1 -\theta_1') +  (\theta_2 -\theta_2'),
	\]
	therefore either $\theta_1' \in [\theta_1-1,\theta_1)$ or $\theta_2' \in [\theta_2-1,\theta_2)$.
	\end{proof}

\section{No perfect state transfer in symmetric decorated paths}\label{sec:sym_dec_paths}

One of the questions that motivated this work is whether graphs of the form described in Figure~\ref{fig:rooted} are candidates to admit perfect state transfer. Godsil showed that two conditions for perfect state transfer are necessary: the vertices have to be strongly cospectral \cite{GodsilStateTransfer12} ; and the vertices have to be periodic, and in this case their eigenvalue support consists of integers or quadratic integers of a particular form \cite{GodsilPerfectStateTransfer12}. An immediate consequence of the latter fact is that the closest distinct eigenvalues in the eigenvalue support of a periodic vertex are either at distance $1$ or at least $\sqrt{2}$ apart. The full characterization of perfect state transfer is stated in \cite[Chapter 2]{CoutinhoPhD}, and we reproduce it below for your convenience.

\begin{theorem} \label{thm:pstcha}
	Let $G$ be a graph, $A(G) = \sum \theta_r E_r$ the spectral decomposition of its adjacency matrix, and let $i,j \in V(G)$. There is perfect state transfer between $i$ and $j$ at time $t$ if and only if the following conditions hold.
	\begin{enumerate}[(a)]
		\item $E_r e_i = \sigma_r E_r e_j$, with $\sigma_r \in \{-1,+1\}$ (that is, vertices $i$ and $j$ are strongly cospectral.)
		\item There is an integer $a$, a square-free positive integer $\Delta$ (possibly equal to 1), so that for all $\theta_r$ in the support of $i$, there is $b_r$ giving
			\[
				\theta_r = \frac{a + b_r \sqrt{\Delta}}{2}.
			\]
			In particular, because $\theta_r$ is an algebraic integer, it follows that all $b_r$ have the same parity as $a$.
		\item There is $g \in \Zds$ so that, for all $\theta_r$ in the support of $i$, $(b_0 - b_r)/g = k_r$, with $k_r \in \Zds$, and ${k_r \equiv (1-\sigma_r)/2 \pmod 2}$.
	\end{enumerate}
	If the conditions hold, then the positive values of $t$ for which perfect state transfer occurs are precisely the odd multiples of $\pi/(g \sqrt{\Delta})$.
\end{theorem}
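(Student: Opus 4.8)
The plan is to analyze the single family of equations obtained by feeding the spectral decomposition $A(G) = \sum_r \theta_r E_r$ into the definition of perfect state transfer. Perfect state transfer between $i$ and $j$ at time $t$ means $\exp(\ii t A)e_i = \gamma e_j$ for some scalar $\gamma$, and unitarity of $\exp(\ii t A)$ forces $|\gamma| = 1$. Since $\exp(\ii t A) = \sum_r \exp(\ii t\theta_r) E_r$, applying $E_r$ to both sides isolates, for every $r$,
\[
\exp(\ii t\theta_r)\,E_r e_i = \gamma\,E_r e_j .
\]
First I would extract condition (a) from this. Taking norms gives $\|E_r e_i\| = \|E_r e_j\|$, that is $(E_r)_{i,i} = (E_r)_{j,j}$ for all $r$, so $i$ and $j$ are cospectral; and since $E_r e_i$ and $E_r e_j$ are real vectors while $\exp(\ii t\theta_r)/\gamma$ is a unit scalar, that scalar must be real, hence $\pm 1$, whenever $\theta_r \in \Phi_i$. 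Writing $\sigma_r$ for this sign yields $E_r e_i = \sigma_r E_r e_j$, which is strong cospectrality, together with the phase relations $\exp(\ii t\theta_r) = \sigma_r\gamma$ on the support; everything else will be squeezed out of these phase relations.

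Next I would derive condition (b). Comparing the phase relations for two eigenvalues in the support gives $\exp(\ii t(\theta_r - \theta_s)) = \sigma_r\sigma_s \in \{-1,+1\}$, so every difference $\theta_r - \theta_s$ lies in $(\pi/t)\Zds$; the eigenvalues in the support are pairwise commensurable. To turn commensurability into the arithmetic form $\theta_r = (a + b_r\sqrt\Delta)/2$, I would use that the support is exactly the root set of the minimal polynomial $\psi$ of $A$ relative to $e_i$ (the least-degree monic $\psi$ with $\psi(A)e_i = 0$); expressing $A$ on the cyclic module generated by $e_i$ shows $\psi \in \mathbb{Q}[x]$, and since $\psi$ is monic and divides the integer polynomial $\phi^G$, Gauss's lemma gives $\psi \in \Zds[x]$. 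Hence the support is a Galois-stable set of algebraic integers. The additive lattice $\Lambda$ generated by the differences $\theta_r - \theta_s$ is cyclic, say $\Lambda = S\Zds$, because all differences are integer multiples of $\pi/t$; the Galois group permutes the support and therefore preserves $\Lambda$, forcing every conjugate of $S$ to be $\pm S$, so $S^2 \in \mathbb{Q}$ and $S = c\sqrt\Delta$ for a positive square-free $\Delta$ (possibly $1$, which recovers the integer case). Feeding this back, together with integrality of the $\theta_r$, yields condition (b). This Galois-and-lattice step is the main obstacle: everything else is bookkeeping, but identifying the quadratic field and controlling denominators is where the real content sits. Alternatively one may invoke Godsil's periodicity theorem, which packages precisely this conclusion.

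With (b) in hand, condition (c) and the list of transfer times are parity bookkeeping. Normalizing $\sigma_0 = +1$ by fixing the reference eigenvalue $\theta_0$ makes $\gamma = \exp(\ii t\theta_0)$, so that $\sigma_r = \exp(\ii t(\theta_0 - \theta_r))$ for every $r$ in the support. Writing $\theta_0 - \theta_r = (b_0 - b_r)\sqrt\Delta/2$ and setting $g = \gcd_r(b_0 - b_r)$ and $k_r = (b_0 - b_r)/g$, reality of $\sigma_r$ forces $t$ to make each $t(\theta_0 - \theta_r)$ an integer multiple of $\pi$; since $\gcd_r k_r = 1$, this holds exactly when $t$ is a multiple of the minimal period, and the parity of that integer equals $(1-\sigma_r)/2$. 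Reading this off gives $k_r \equiv (1-\sigma_r)/2 \pmod 2$, which is (c), and pins the admissible $t$ to the odd multiples of $\pi/(g\sqrt\Delta)$ (here the factor of two relating $\theta_0 - \theta_r$ to $\sqrt\Delta$ must be tracked with care).

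Finally, for the converse I would run the same computation backwards. Assuming (a), (b) and (c), set $t = \pi/(g\sqrt\Delta)$ and $\gamma = \exp(\ii t\theta_0)$; conditions (b) and (c) give $\exp(\ii t(\theta_0 - \theta_r)) = (-1)^{k_r} = \sigma_r$, hence $\exp(\ii t\theta_r)\,E_r e_i = \gamma\,E_r e_j$ for every $r$ with $\theta_r \in \Phi_i$, while for $r$ outside the support both sides vanish. Summing over all $r$ and using that cospectrality gives $\Phi_i = \Phi_j$, we obtain
\[
\exp(\ii t A)e_i = \gamma \sum_r E_r e_j = \gamma\, e_j ,
\]
which is perfect state transfer. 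The same parity computation identifies all admissible times, completing the equivalence.
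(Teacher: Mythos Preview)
The paper does not prove Theorem~\ref{thm:pstcha}: it is quoted as a known characterization, with the text ``The full characterization of perfect state transfer is stated in \cite[Chapter 2]{CoutinhoPhD}, and we reproduce it below for your convenience,'' and no proof is supplied. So there is no proof in the paper to compare yours against.

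That said, your argument is the standard one and is essentially correct. The derivation of (a) from the phase equations $\exp(\ii t\theta_r)E_r e_i = \gamma E_r e_j$ is clean; the identification of the lattice of differences as $S\Zds$ and the Galois argument forcing $S^2\in\Qds$ is the usual route to (b), and you are right to flag the denominator bookkeeping (showing that a common $a$ works, and that the $b_r$ are integers of the correct parity) as the only place where care is needed---this uses that each $\theta_r$ lies in the ring of integers of $\Qds(\sqrt\Delta)$ and that the differences are pure multiples of $\sqrt\Delta$. The passage from the phase relations to (c) and to the list of times is straightforward, and your converse is fine. One small point: in the final time computation you should confirm that the factor $1/2$ in $\theta_0-\theta_r=(b_0-b_r)\sqrt\Delta/2$ and the factor $g$ combine so that $t=\pi/(g\sqrt\Delta)$ gives $t(\theta_0-\theta_r)=\pi k_r/2\cdot 2 = \pi k_r$; this works precisely because $g$ divides every $b_0-b_r$, but it is worth writing out once explicitly.
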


Most results denying perfect state transfer in families of graphs either show that candidate vertices are not strongly cospectral; or, independently, that eigenvalues are not of the desired form. We were quite pleased to find out that for graphs of the form depicted in Figure~\ref{fig:rooted}, it is strong cospectrality that allows us to show that the eigenvalues are too near each other. Note however that there is still one piece missing: Theorems \ref{thm:main} and \ref{thm:pstcha} guarantee that if $G$ is just like in Figure~\ref{fig:rooted} with mirror-symmetric $\alpha$'s, $G$ is neither $P_2$ nor $P_3$, and perfect state transfer occurs in $G$, then $g = \Delta = 1$, and equivalently, $\pi$ is the minimum time the transfer occurs. We show below that this is never the case, for any (simple unweighted)  graph, with a proof that is independent from the rest of this paper.

\begin{theorem}
	Assume perfect state transfer occurs in $G$ at minimum time $t$. Then $t \leq \pi/\sqrt{2}$, and therefore either $g$ or $\Delta$ (as in Theorem~\ref{thm:pstcha}) are $\geq 2$. 
\end{theorem}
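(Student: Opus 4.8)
The plan is to reduce the whole statement to ruling out a single case in Theorem~\ref{thm:pstcha}. If perfect state transfer occurs at minimum time $t$, then by that theorem $t = \pi/(g\sqrt{\Delta})$ where $g \geq 1$ is an integer and $\Delta \geq 1$ is square-free, these being the same $g$ and $\Delta$ that appear in conditions (b) and (c). Thus $t \leq \pi/\sqrt{2}$ is equivalent to $g^2\Delta \geq 2$, which can only fail when $g = \Delta = 1$; and the assertion that $g \geq 2$ or $\Delta \geq 2$ is literally the statement that $g = \Delta = 1$ does not occur. So everything comes down to showing that the minimum transfer time cannot be exactly $\pi$.

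To do this I would assume $g = \Delta = 1$ and extract a contradiction from conditions (b) and (c). With $\Delta = 1$, condition (b) gives $\theta_r = (a + b_r)/2$ with $a \in \Zds$ and each $b_r$ of the same parity as $a$, so $b_0 - b_r$ is an even integer for every eigenvalue $\theta_r$ in the support of the transferring vertex $i$. Substituting $g = 1$ into condition (c), the integer $k_r = b_0 - b_r$ is even, while the congruence $k_r \equiv (1-\sigma_r)/2 \pmod 2$ with $(1-\sigma_r)/2 \in \{0,1\}$ then forces $\sigma_r = 1$ for every such $r$. But $\sigma_r = 1$ throughout the support of $i$ means $E_r e_i = E_r e_j$ for all those $r$, and $E_r e_i = E_r e_j = 0$ for the remaining $r$ since $i$ and $j$ are cospectral; summing over $r$, $e_i = e_j$, contradicting that perfect state transfer is between distinct vertices.

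I do not expect a real obstacle here beyond Theorem~\ref{thm:pstcha} itself; the one delicate point is the parity bookkeeping, namely noticing that taking $g = 1$ automatically makes the $k_r$ even, so condition (c) degenerates to $\sigma_r = +1$ on the entire support of $i$ --- which is exactly the obstruction to $i \neq j$. It is also worth stating plainly that the $g$ and $\Delta$ governing the minimum time $\pi/(g\sqrt{\Delta})$ are the ones used in (b) and (c), so a minimum time of $\pi$ really does pin down $g = \Delta = 1$ rather than anything weaker.
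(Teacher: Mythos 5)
Your reduction to the case $g=\Delta=1$ (equivalently, minimum time exactly $\pi$ and integer eigenvalues in the support) is correct and is exactly how the paper begins. The problem is the next step: the claim that $g=\Delta=1$ forces every $k_r$ to be even, hence $\sigma_r=+1$ throughout. Under the normalization of $g$ that is consistent with the minimum-time formula $\pi/(g\sqrt{\Delta})$ (the one in \cite{CoutinhoPhD}, where $g=\gcd\bigl((\theta_0-\theta_r)/\sqrt{\Delta}\bigr)=\gcd\bigl((b_0-b_r)/2\bigr)$ and the relevant integer is $k_r=(\theta_0-\theta_r)/(g\sqrt{\Delta})=(b_0-b_r)/(2g)$), setting $g=\Delta=1$ gives $k_r=\theta_0-\theta_r$, which is even precisely when $\theta_r\equiv\theta_0\pmod 2$ --- not automatically. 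You were led astray by the literal phrasing $k_r=(b_0-b_r)/g$ in the statement of Theorem~\ref{thm:pstcha}, which is off by a factor of $2$ from the time formula (read literally it assigns $P_2$ the minimum time $\pi/4$ rather than $\pi/2$, so it cannot be taken at face value). The correct conclusion from condition (c) is not $\sigma_r\equiv +1$ but the parity separation the paper derives: $\Phi_{ij}^+$ and $\Phi_{ij}^-$ lie in distinct residue classes modulo $2$.

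This is not a repairable bookkeeping slip: no argument that uses only the spectral data in conditions (a)--(c) can prove the theorem, because the statement genuinely requires $G$ to be a simple unweighted graph. For instance, the weighted two-vertex graph with adjacency matrix $A=\tfrac12(e_i+e_j)(e_i+e_j)^{\mathsf T}$ has eigenvalue support $\{1,0\}$ at $i$ with $\sigma$-signs $+1$ and $-1$, satisfies (b) and (c) with $g=\Delta=1$, and admits perfect state transfer at time exactly $\pi$ since $\exp(\ii\pi A)=I-2E_1$ maps $e_i$ to $-e_j$. What your proposal omits is therefore the entire second half of the paper's proof: writing $\Phi_{ij}^{\pm}$ as the pole sets of $(\phi^{G\setminus i}\pm\sum_{P:i\to j}\phi^{G\setminus P})/\phi^{G}$, reducing modulo $2$, and using that these are monic integer polynomials to show that $q^+$ and $q^-$ cannot be coprime in $\mathds{F}_2[x]$, contradicting the parity separation. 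That $\mathds{F}_2[x]$ argument is where the combinatorial input (integrality and monicity of characteristic polynomials of simple graphs) enters, and it is the actual content of the theorem.
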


\begin{proof}
	Assume state transfer occurs between $i$ and $j$ at time $\pi$, and thus $g = \Delta = 1$. Recall the definition of $\Phi_{ij}^\pm$, and note that these correspond respectively to the indices $r$ with $\sigma_r = \pm 1$ in Theorem~\ref{thm:pstcha}. It follows from condition (c) in said theorem that 
	\begin{enumerate}[(i)]
		\item either $\Phi_{ij}^+ \subseteq 2 \Zds$ and $\Phi_{ij}^- \subseteq 2 \Zds + 1$;
		\item or $\Phi_{ij}^+ \subseteq 2 \Zds+1$ and $\Phi_{ij}^- \subseteq 2 \Zds$.
	\end{enumerate}
    We show below that this parity separation is impossible. 
    
    Fixed $\theta$, note that $E_\theta e_i = \pm E_\theta e_j$ if and only if $(E_\theta)_{ii} = (E_\theta)_{jj} = \pm (E_\theta)_{ij}$ (this is easy, but see for instance \cite{godsil2017strongly}). From Equation~\eqref{eq:offdiagonal} and Lemma~\ref{lem:wronskian}, we have
    \[
    \frac{\phi^{G \setminus i}}{\phi^G} = \sum_{\theta} \frac{(E_{\theta})_{ii}}{x - \theta} \quad \text{and} \quad \frac{\sum_{P:i\to j}\phi^{G \setminus P}}{\phi^G} = \sum_{\theta} \frac{(E_{\theta})_{ij}}{x - \theta}.
    \]
    This implies that $\Phi_{ij}^+$ and $\Phi_{ij}^-$ are respectively the set of poles of 
    \[
        \frac{\phi^{G \setminus i} + \sum_{P : i \to j} \phi^{G \setminus P}}{\phi^G} \quad \text{and} \quad \frac{\phi^{G \setminus i} - \sum_{P : i \to j} \phi^{G \setminus P}}{\phi^G}.
    \]
    
    We write 
    \[\frac{\phi^{G \setminus i} + \sum_{P : i \to j} \phi^{G \setminus P}}{\phi^G} = \frac{p^+}{q^+},\]
    where $p^+, q^+ \in \mathds{Z}[x]$ are monic, $\gcd(p^+, q^+) = 1$ and $\text{deg } q^+ = \text{deg } p^+ + 1$. Consider $p^-$ and $q^-$ analogously. Then
    \[
        \begin{cases}
            (\phi^{G \setminus i} + \sum_{P : i \to j} \phi^{G \setminus P}) q^+ = \phi^G p^+ \ , \text{and} \\
            (\phi^{G \setminus i} - \sum_{P : i \to j} \phi^{G \setminus P}) q^- = \phi^G p^-,
        \end{cases}
    \]
    implying that
    \[
        \phi^G (p^+q^- - p^-q^+) \equiv 0 \text{ in } \mathds{F}_2[x] \implies p^+ q^- \equiv p^- q^+ \text{ in } \mathds{F}_2[x]
    \]

    This last implication arriving from the fact that $\phi^G$ is monic, and so ${\phi^G \not\equiv 0 \text{ in } \mathds{F}_2[x]}$. As $\text{deg } q^+ = \text{ deg } p^+ + 1$, it follows that the $\gcd (q^-, q^+) \not\equiv 1 \text{ in } \mathds{F}_2[x]$. However, since $\Phi_{ij}^+$ and $\Phi_{ij}^-$ are disjoint in $\Zds_2$, and these are respectively the zeros of $q^+$ and $q^-$, it must be that $\gcd(q^+, q^-) = 1 \text{ in } \mathds{F}_2[x]$, a contradiction.
\end{proof}

\begin{corollary}
	Let $G$ be a graph of the form 
\begin{figure}[H]
\begin{center}
		\includegraphics{img/rooted.pdf}
\end{center} 
\end{figure}
\noindent with $\alpha_i^{G_i} = \alpha_{n+1-i}^{G_{n+1-i}}$. Assume $G$ is neither $P_2$ nor $P_3$. Then, there cannot be perfect state transfer between $1$ and $n$.
\end{corollary}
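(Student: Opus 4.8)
The plan is to assume, for contradiction, that perfect state transfer occurs between $1$ and $n$ in $G$, and to contradict the theorem above (that the minimum transfer time in any simple graph is at most $\pi/\sqrt 2$). The first task is to make Theorem~\ref{thm:main} applicable. Perfect state transfer forces $1$ and $n$ to be strongly cospectral (Theorem~\ref{thm:pstcha}(a)). The hypothesis $\alpha_i^{G_i}=\alpha_{n+1-i}^{G_{n+1-i}}$ is exactly condition (1) of Theorem~\ref{thm:main}, and by Lemma~\ref{lem:alphas} it also gives $\alpha_1^G=\alpha_n^G$; then Theorem~\ref{thm:strcospeccharact} turns strong cospectrality of $1$ and $n$ into precisely condition (2) of Theorem~\ref{thm:main}, namely $\alpha_k^{G_k}(\theta)\neq\infty$ for all $k$ and all $\theta$ with $\alpha_1^G(\theta)=0$. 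Since $G$ is neither $P_2$ nor $P_3$, Theorem~\ref{thm:main} then yields two distinct eigenvalues $\lambda\neq\mu$ of $A(G)$, both in the eigenvalue support of vertex $1$, with $|\lambda-\mu|<\sqrt 2$ if $n$ is odd and $|\lambda-\mu|\leq 1$ if $n$ is even.

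Next I would bring in the arithmetic of perfect state transfer. By Theorem~\ref{thm:pstcha}(b), every eigenvalue in the support of vertex $1$ has the form $(a+b_r\sqrt\Delta)/2$ for a fixed integer $a$ and square-free $\Delta\geq1$, with all $b_r$ of the parity of $a$; hence two distinct such eigenvalues differ by at least $\sqrt\Delta$, and when $\Delta=1$ their difference is a nonzero integer. Combining this with the bound from Theorem~\ref{thm:main} forces $|\lambda-\mu|=1$ (and $\Delta=1$), in both parities. The key step is then the following standard computation: writing $A(G)=\sum_r\theta_r E_r$, and letting $\theta_0$ be the largest eigenvalue of the component of $G$ containing $1$ (so that $\sigma_{\theta_0}=+1$, where $\sigma_r\in\{\pm1\}$ are the strong-cospectrality signs of $1$ and $n$), a transfer time $t$ satisfies $\exp(\ii tA)e_1=\gamma e_n$ with $|\gamma|=1$, and comparing the coefficients of the mutually orthogonal nonzero vectors $E_r e_1$ gives $\exp(\ii(\theta_0-\theta_r)t)=\sigma_r$ for every $\theta_r$ in the support. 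Evaluating at $\theta_r=\lambda$ and $\theta_r=\mu$ and dividing yields $\exp(\ii(\mu-\lambda)t)=\sigma_\lambda\sigma_\mu\in\{\pm1\}$; since $|\mu-\lambda|=1$ this forces $\exp(\ii t)=\pm1$, so $t$ is a positive integer multiple of $\pi$. Hence every transfer time --- in particular the minimum one --- is $\geq\pi>\pi/\sqrt 2$, contradicting the theorem above. (This is the ``$g=\Delta=1$'' situation foreshadowed in the paragraph before that theorem.)

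The step I expect to be the crux is the short chain in the second paragraph: going from ``the two closest support eigenvalues are strictly within $\sqrt 2$'' to ``they are at distance exactly $1$'', and thence to ``every transfer time is a multiple of $\pi$''. This is where the $\sqrt 2$ threshold in Theorem~\ref{thm:main} has to interact with the rigidity of the perfect-state-transfer eigenvalue parametrization, and it also explains why both results of this section are genuinely used and neither is enough by itself: Theorem~\ref{thm:main} only bounds the gap from above --- which, once the PST constraints are imposed, pins it at $1$ and makes every transfer time a multiple of $\pi$ --- while the theorem above is precisely what rules out transfer at a minimum time of $\pi$. Everything else is routine. A minor point to handle carefully is that if some $G_i$ is disconnected then so is $G$, so one should pass to the component of $G$ containing $1$ and $n$ (which still carries the whole path and all the rooted pieces) before invoking Perron--Frobenius for $\theta_0$.
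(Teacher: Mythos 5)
Your proposal is correct and follows essentially the same route the paper intends: establish conditions (1)--(2) of Theorem~\ref{thm:main} from the symmetry hypothesis plus the strong cospectrality forced by perfect state transfer, use the resulting eigenvalue gap together with Theorem~\ref{thm:pstcha}(b) to force $\Delta=1$ and a gap of exactly $1$ (equivalently $g=\Delta=1$, i.e.\ minimum transfer time $\pi$), and contradict the theorem bounding the minimum time by $\pi/\sqrt{2}$. Your explicit ratio-condition computation $\exp(\ii(\theta_0-\theta_r)t)=\sigma_r$ merely spells out a step the paper leaves implicit in its $(g,\Delta)$ parametrization.
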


This corollary rules out most natural candidates to admit perfect state transfer among trees. It has been conjectured in \cite{CoutinhoLiu2} that no tree other than $P_2$ and $P_3$ admit perfect state transfer, so the only cases remaining are trees whose strongly cospectral vertices are like those depicted in Figure~\ref{fig:schwenk}. The corollary also immediately generalizes the result in \cite{CoutinhoGodsilJulianovanBommel}, which proved the special case for then $n=2$ or $n=3$ and $G_2 = K_1$, using a completely different approach with variational methods for eigenvalues.

\section{Integral trees} \label{sec:trees}

A major topic of interest in spectral graph theory has always been the pursuit of graphs with integral spectrum (see \cite{harary1974graphs}). Trees have particularly received attention lately: to cite two recent and important references, Brouwer \cite{brouwer2008small} characterized all integral trees on at most 50 vertices and Csikv{\'a}ri \cite{csikvari2010integral} showed that there are integral trees with arbitrarily large diameter (see \cite{brouwer2011integral,brouwer2008integral,ghorbani2016integral} for other recent works). H\'{i}c and Nedela \cite{hic1998balanced} introduced balanced trees as candidates to have integral spectrum. Given a tree $T$, its centre is either the vertex or the edge that sits right in the middle of a diametral path (it is a standard exercise to show that any diametral path yields the same centre). Of course the centre of trees of even diameter is a vertex, and the centre of those of odd diameter is an edge. A tree is called balanced if all vertices at the same distance from the centre of the tree have the same degree. Any balanced tree is completely determined by its diameter parity and the sequence of degrees of the vertices determined by their distance to the centre. H\'{i}c and Nedela constructed several examples of balanced integral trees with even diameter and with diameter three. They showed that there are no balanced integral tree of diameters $7$ or $4k+1$ for any $k>0$, and left it open the question regarding the remaining odd cases. Our work in Section~\ref{sec:main} leads to a direct answer in the negative: there are no balanced integral trees of diameter greater than three.

\begin{corollary}
	Let $T$ be a tree obtained by the rooted product of trees $T_1$ and $T_2$ over $P_2$, and assume that $\alpha_{1}^{T_1} = \alpha_2^{T_2}$. If $T_1$ has diameter larger than $1$, then $T$ is not integral. In particular, there are no balanced integral trees of odd diameter larger than three. 
\end{corollary}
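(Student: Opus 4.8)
The plan is to deduce this corollary directly from Theorem~\ref{thm:main} applied to the path $P_2$ (the even case, which gives the bound $|\lambda-\mu|\le 1$), together with the standard fact that trees are bipartite, so they contain no odd cycles and hence no weighted loops arise spuriously. First I would set $n=2$, $G_1 = T_1$, $G_2 = T_2$, with vertex $1$ the root of $T_1$ and vertex $2$ the root of $T_2$. The hypothesis $\alpha_1^{T_1} = \alpha_2^{T_2}$ is precisely condition (1) of Theorem~\ref{thm:main}. For condition (2), I would argue that since $T_1$ is a tree, $\alpha_i^{G_i}$ for an internal vertex $i$ of $T_1$ never equals $\infty$ at a zero $\theta$ of $\alpha_1^T$: indeed, by Lemma~\ref{lemma:derivative_quotient} and the interlacing interpretation, $\alpha_i^{G_i}(\theta)=\infty$ would force $\theta$ to be a common eigenvalue with higher multiplicity in a deleted subgraph, but I should verify whether this can be ruled out for all $\theta$ in the support of $1$ or whether one needs to invoke Theorem~\ref{thm:strcospeccharact} — namely that $1$ and $n$ are automatically strongly cospectral here. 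Actually the cleanest route is: condition (1) plus Lemma~\ref{lem:alphas} gives $\alpha_1^T = \alpha_2^T$, and then I would check that in a rooted product of two \emph{trees} over $P_2$ with symmetric $\alpha$'s, vertices $1$ and $2$ are strongly cospectral, so by Theorem~\ref{thm:strcospeccharact} condition (2) of Theorem~\ref{thm:main} holds.

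Given conditions (1) and (2), and noting that $G = T$ is not $P_2$ (since $T_1$ has diameter $>1$, so $T$ has more than $2$ vertices) and is not $P_3$ (a path on $3$ vertices has $T_1 = P_2$, diameter $1$, excluded), Theorem~\ref{thm:main} in the even case yields two distinct eigenvalues $\lambda,\mu$ of $A(T)$, both in the support of vertex $1$, with $|\lambda-\mu|\le 1$. If $T$ were integral, $\lambda$ and $\mu$ would be distinct integers, forcing $|\lambda-\mu|\ge 1$, hence $|\lambda-\mu| = 1$. But $T$ is a tree, hence bipartite, so its spectrum is symmetric about $0$ and, more to the point, if $\lambda$ is an eigenvalue with an eigenvector supported at $1$, so is $-\lambda$; moreover consecutive integers have opposite parity. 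I would then need to rule out $|\lambda - \mu| = 1$ exactly — this is the one genuinely delicate point, since the theorem only gives $\le 1$, not $< 1$. The resolution is a parity/bipartiteness argument: in a bipartite graph the characteristic polynomial satisfies $\phi^G(x) = \pm\phi^G(-x)$, and the analogous symmetry $\alpha_1^G(x) = -\alpha_1^G(-x)$ (up to sign) holds, so the zeros of $\alpha_1^G$ come in $\pm$ pairs; combining this with the integrality and the bound forces a contradiction unless one examines small cases, which correspond exactly to diameter $\le 3$.

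The main obstacle, then, is bridging the gap between the non-strict inequality $|\lambda-\mu|\le 1$ delivered by the even case of Theorem~\ref{thm:main} and the strict separation $|\lambda-\mu|\ge 1$ that integral eigenvalues would satisfy: I must show that $|\lambda-\mu|=1$ cannot actually occur here, or else that even attaining equality is incompatible with integrality of the \emph{whole} spectrum. I expect the cleanest argument invokes the mod-$2$ reasoning already used in Section~\ref{sec:sym_dec_paths}: the two integer eigenvalues $\lambda,\mu$ in $\Phi_{12}^+ \cup \Phi_{12}^-$ would, if at distance exactly $1$, sit in opposite residue classes mod $2$; but the folding of Section~\ref{sec:support} places them (after the even-case analysis, as the extreme zeros $\theta_1,\theta_2$ of $\alpha_1^{G^+}$ and the nearby zeros $\theta_1',\theta_2'$ of $\alpha_1^{G^-}$) in a configuration where the Letac/Pólya–Szegő measure argument in the proof of Theorem~\ref{thm:main} gives $(\theta_1-\theta_1') + (\theta_2-\theta_2')\le 2$, and if one of these gaps is a positive integer $\le 1$ it must equal $1$; a short separate check, using bipartiteness to pair $\theta_i$ with $-\theta_i$, then shows the remaining configuration forces $T_1$ to have diameter $1$. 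I would finish by translating the graph-theoretic conclusion into the language of balanced trees: a balanced tree of odd diameter $2d+1$ is exactly a rooted product over $P_2$ of two copies of a balanced-type rooted tree of radius $d$, so $\alpha_1^{T_1}=\alpha_2^{T_2}$ holds automatically by symmetry, and diameter $>3$ means $d\ge 2$, i.e.\ $T_1$ has diameter $\ge 2>1$; the corollary then gives non-integrality.
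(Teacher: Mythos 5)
Your reduction to the even case of Theorem~\ref{thm:main} with $n=2$ is the right frame, and your verification of conditions (1) and (2) is fine (your detour through strong cospectrality is equivalent to the paper's one--line observation that $\alpha_1^T=\alpha_1^{T_1}-1/\alpha_2^{T_2}$ forces $\alpha_1^{T_1}(\theta)\neq\infty$ at any zero of $\alpha_1^T$). You also correctly isolate the crux: the theorem as stated only gives $|\lambda-\mu|\le 1$, while integrality only forbids $|\lambda-\mu|<1$. But your proposed way of closing that gap does not work, and it is not what the paper does. The paper does not invoke the \emph{statement} of Theorem~\ref{thm:main}; it reruns the P\'olya--Szeg\H{o}/Letac measure computation from the even case with one extra combinatorial input: $\alpha:=\alpha_1^{T_1}$ has at least \emph{three} zeros, hence at least three branches, hence $\alpha^{-1}((-1,1))$ has at least three components, each of positive length. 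The two extremal components have lengths $\theta_1-\theta_1'$ and $\theta_2-\theta_2'$, so $2=\mu\bigl(\alpha^{-1}((-1,1))\bigr)>(\theta_1-\theta_1')+(\theta_2-\theta_2')$, and one of the two gaps is \emph{strictly} less than $1$. The strictness comes from the existence of a third zero, not from parity or bipartiteness.

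Your bipartiteness/parity route cannot supply that strictness. Take $T_1=T_2=K_{1,2}$ rooted at the centre, so that $T$ is the double star $S(2,2)$: it is bipartite, $\alpha_1^{T_1}=x-2/x$ has exactly two zeros, the eigenvalues in the support of vertex $1$ are $\{\pm 1,\pm 2\}$, both gaps equal exactly $1$, the zeros pair off as $\pm\theta$ precisely as bipartiteness predicts, and $T$ \emph{is} integral. So the extremal configuration you hope to exclude by ``a short separate check using bipartiteness'' actually occurs, and your claim that it ``forces $T_1$ to have diameter $1$'' is false ($K_{1,2}$ has diameter $2$; this borderline case is also a genuine counterexample to the first sentence of the corollary as literally stated, though not to the ``in particular'' clause). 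What the equality case really forces is that the root of $T_1$ has eccentricity $1$, i.e.\ $T_1$ is a star rooted at its centre. That is exactly why the balanced--tree conclusion is safe: for a balanced tree of odd diameter $2d+1>3$ the centre of $T_1$ has eccentricity $d\ge 2$, so the root supports at least $d+1\ge 3$ eigenvalues, $\alpha_1^{T_1}$ has at least three zeros, and the strict measure inequality applies. To repair your write--up, replace the parity argument by this three--zeros/strict--measure argument.
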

\begin{proof}
	Our goal is to apply the even length case of Theorem~\ref{thm:main} (in a slightly stronger statement, that follows straight from the proof). Note that hypothesis (1) holds, and hypotheses of type (2) hold because
	\[
		\alpha_1^T = \alpha_1^{T_1} - \frac{1}{\alpha_2^{T_2}},
	\]
	so $\alpha_1^T(\theta) = 0$ implies that $\alpha_1^{T_1}(\theta) \neq \infty$ (and $\alpha_1^{T_1} =\alpha_2^{T_2}$). 
	
	Recall that $\alpha_1^{G^\pm} = \alpha_1^{T_1} \mp 1$, thus we set $\alpha =  \alpha_1^{T_1}$ and $E = (-1,1)$, and apply the argument in the final part of the proof of the even case in Theorem~\ref{thm:main}. As $T_1$ has diameter larger than $1$, it follows that $\alpha_{1}^{T_1}$ has at least three zeros, therefore $\alpha^{-1}(E)$ contains at least three intervals. If $\theta_1$ and $\theta_1'$ are the largest zeros of $\alpha_1^{G^\pm}$ respectively, $\theta_2$ and $\theta_2'$ are the smallest, it follows that
	\[
		2 = \mu(E) > (\theta_1 - \theta_1') + (\theta_2 - \theta_2'),
	\]
	and so either $\theta_1' \in (\theta_1-1,\theta_1)$ or $\theta_2' \in (\theta_2-1,\theta_2)$, therefore some eigenvalue of $T$ is not an integer.
\end{proof}

We now show another application of our technology to the study of integral trees (integral graphs, in fact). This time symmetry will not be a requirement.

Jacobs and Trevisan \cite{TrevisanJacobsOriginal} introduced an algorithm to locate eigenvalues of trees. It was extended to several graph families, resulting in the book \cite{TrevisanJacobsBook}, and also used to derive an important result on the distribution of Laplacian eigenvalues of trees \cite{TrevisanJacobsMostLaplacian}. The rational functions introduced earlier allow for a slightly alternative interpretation of this algorithm, which is quite convenient to deal with rooted products on paths. We will briefly develop this interpretation, followed by an interesting application to study integral trees.

 Assume $T$ is a rooted tree (as in having a root vertex) on vertices $\{1,...,n\}$, and make vertex $1$ the root. For any vertex $i$ of $T$, let $T(i)$ denote the downward tree obtained from $T$ upon deleting all vertices which are not equal to $i$ or to some descendant of it. We define the rational function on the variable $x$ by 
 \begin{equation}\label{eq:algorithm}
 	d_i = x - \sum_{j \text{ child of }i} \frac{1}{d_j}.
 \end{equation}
 It is immediate to verify from Equation~\eqref{eq:schwenk} that
 \begin{equation}
 	d_i = \alpha_{i}^{T(i)}. \label{eq:algorithmalpha}
 \end{equation}
 Recall now that $d_i$ will be always increasing, except on its poles, and that it has simples zeros and simple poles.
 
 At $x = \theta$, the rational function $d_i$ can be a negative real number, equal to zero, a positive real number, or have a pole. We denote these facts by, respectively, $d_i(\theta) < 0$, $d_i(\theta) = 0$, $d_i(\theta) > 0$ or $d_i(\theta) = \infty$.
 
 The following result is morally equivalent to Theorem 3 in \cite{TrevisanJacobsOriginal}.
 
 \begin{theorem}
 	For $x = \theta$, the number of vertices $i$ so that $d_i(\theta) > 0$ or $d_i(\theta) = \infty$ is equal to the number of eigenvalues of $T$ in the interval $(-\infty,\theta)$, counted with multiplicity.
 \end{theorem}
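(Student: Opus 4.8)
The plan is to prove this by induction on the tree, processing vertices from the leaves up toward the root, which is exactly the order in which the rational functions $d_i$ can be evaluated via the recursion \eqref{eq:algorithm}. The key bookkeeping device is the identity \eqref{eq:algorithmalpha}, namely $d_i = \alpha_i^{T(i)}$, which translates the combinatorial quantity ``number of $i$ with $d_i(\theta)>0$ or $d_i(\theta)=\infty$'' into a statement about eigenvalue multiplicities of the downward subtrees $T(i)$. Recall from the discussion following Lemma~\ref{lemma:derivative_quotient} that, for a fixed $\theta$, the three possibilities $d_i(\theta)<0$ or $=0$, $d_i(\theta)$ finite nonzero, and $d_i(\theta)=\infty$ correspond respectively to $\theta$ being in the support of $i$ in $T(i)$ (multiplicity jumps down by one when $i$ is deleted), $\theta$ having equal multiplicity in $T(i)$ and $T(i)\setminus i$, and multiplicity jumping up by one. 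The sign of $d_i(\theta)$ when it is finite and nonzero is the extra piece of information the algorithm exploits.

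The main technical step I would isolate is the following claim about a single vertex $i$ with children $j_1,\dots,j_t$: writing $n^+(i)$ for the number of vertices in the closed downward subtree $T(i)$ whose $d$-value at $\theta$ is positive or infinite, one has
\[
 n^+(i) \;=\; \big(\text{number of eigenvalues of } T(i) \text{ in } (-\infty,\theta)\big)
 \;-\; \big[\,d_i(\theta) < 0 \text{ or } d_i(\theta)=0\,\big]_{\text{extra correction}},
\]
or more cleanly: the number of eigenvalues of $T(i)$ strictly below $\theta$ equals $\sum_{k} n^+(j_k)$ plus a correction term in $\{0,1\}$ determined by the sign/pole status of $d_i(\theta)$, together with whether $\theta$ is itself an eigenvalue. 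The cleanest way to get this is via interlacing: $T(i)$ is obtained from the disjoint union of the $T(j_k)$ by adding the vertex $i$ joined to each root $j_k$; since $i$ is a single vertex, the eigenvalues of $T(i)$ interlace those of $\bigsqcup_k T(j_k)$. Counting eigenvalues below $\theta$ on both sides, and using that $d_i(\theta)=x - \sum_k 1/d_{j_k}(\theta)$ records precisely the sign of $\phi^{T(i)}(\theta)/\phi^{T(i)\setminus i}(\theta)$ hence the parity of how many eigenvalues of $T(i)$ lie below $\theta$ relative to $\bigsqcup T(j_k)$, pins down the correction term. The delicate sub-case is when some $d_{j_k}(\theta)=\infty$, i.e. $\theta$ is a pole of $d_{j_k}$; here one uses the convention $1/\infty = 0$ and the fact, already invoked repeatedly in the paper, that poles and zeros of each $d_{j_k}$ are simple and interlace, so a pole of $d_{j_k}$ contributes a genuine cancellation (the edge to $i$ effectively detaches), and one must check the count of eigenvalues below $\theta$ still matches — this is where a careful limiting argument ($\theta \pm \varepsilon$) is cleanest.

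With that per-vertex claim in hand, the theorem follows by summing over all vertices along the tree from leaves to root: the corrections telescope, and at the root vertex $1$ we get that $\sum_{i} \big[d_i(\theta)>0 \text{ or } =\infty\big]$ equals the number of eigenvalues of $T(1)=T$ strictly less than $\theta$, which is the assertion. I would also dispatch the base case — a leaf $i$ has $d_i = x$, so $d_i(\theta)>0$ iff $\theta>0$, matching that $T(i)=K_1$ has its single eigenvalue $0$ below $\theta$ iff $\theta>0$ — as a trivial instance of the same claim.

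I expect the main obstacle to be the careful handling of the degenerate case $\theta$ an eigenvalue, and especially the bookkeeping when several children contribute poles simultaneously: one must be sure the $\{0,1\}$-valued correction is exactly right and that nothing is double-counted between a vertex and its children. The honest way to sidestep a fragile case analysis is the perturbation trick: prove the statement first for all $\theta$ that are neither eigenvalues of $T$ nor poles of any $d_i$ (a cofinite set), where every $d_i(\theta)$ is finite and nonzero and interlacing is strict, and then extend to arbitrary $\theta$ by taking $\theta' \downarrow \theta$ (or $\uparrow$) and using that both sides of the claimed equality are constant on each maximal open interval free of eigenvalues and left-continuous / right-continuous in a controlled way. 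This reduces the whole argument to the generic case plus a one-line limiting remark, which is the plan I would actually write up.
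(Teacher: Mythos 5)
Your proposal is correct, but it takes a genuinely different route from the paper. The paper proves the statement dynamically: it sweeps $\theta$ from left to right, shows the number of vertices with positive or infinite $d$-value never decreases, and matches each increment with an eigenvalue entering $(-\infty,\theta)$ — the root's $d_1=\alpha_1^T$ hitting zero accounts for eigenvalues in the support of the root, while $m\ge 2$ children of some vertex vanishing simultaneously accounts for the extra multiplicity $m-1$ via eigenvectors supported below that vertex. Your argument is static at a fixed generic $\theta$: by Cauchy interlacing for deleting the root $i$ of $T(i)$, the count of eigenvalues of $T(i)$ below $\theta$ exceeds that of $\bigsqcup_k T(j_k)$ by $0$ or $1$, and since $d_i(\theta)=\phi^{T(i)}(\theta)/\phi^{T(i)\setminus i}(\theta)$ with both polynomials monic, the sign of $d_i(\theta)$ is exactly the parity of that difference, so it pins the correction down to $[d_i(\theta)>0]$; summing over vertices telescopes to the eigenvalue count for $T$. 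This is essentially a Sylvester-inertia/Sturm-sequence argument and it buys a cleaner treatment of multiplicities (the paper's handling of the ``$m$ children become zero'' case is the sketchier part of its proof), at the cost of needing the perturbation step. One detail you should fix when writing it up: the limit must be taken from the left, $\theta'\uparrow\theta$. Just to the left of $\theta$ every vertex with $d_i(\theta)>0$ or $d_i(\theta)=\infty$ has $d_i(\theta')>0$ and every vertex with $d_i(\theta)\le 0$ has $d_i(\theta')<0$ (zeros and poles are simple and $d_i$ increases on branches, Lemma~\ref{lemma:derivative_quotient}), and $N_{<\theta'}(T)=N_{<\theta}(T)$ for $\theta'$ slightly below $\theta$; the limit from the right fails on both counts ($d_i(\theta)=\infty$ becomes negative just after a pole, $d_i(\theta)=0$ becomes positive, and an eigenvalue at $\theta$ would be wrongly counted). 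With that direction fixed, your generic-case-plus-one-sided-limit plan is a complete and valid proof.
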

\begin{proof}
	The first thing to note is that as $\theta$ increases, the number of positive signs and poles never decreases. This is obviously true if $d_i(\theta) > 0$ and $i$ is a leaf, because in this case Equation~\ref{eq:algorithmalpha} gives that
	\[
		d_i(x) = x.
	\]
	If $i$ is not a leaf, then the only reason $d_i$ would no longer be positive or a pole after $\theta$ is if $\theta$ is pole. In this case, Equation~\ref{eq:algorithm} implies that $d_j(\theta) = 0$ for some $j$ child of $i$, and thus the number of nodes with positive signs or poles at $\theta+\epsilon$ does not decrease.
	
	This last paragraph also explains that positive signs within the tree only appear when the vertex is possibly the root or it is not and its parent ceases to be positive. So there are only two ways the number of positive signs or poles increases: in the first, the root becomes positive right after $\theta$, and this occurs when
	\[
		d_1(\theta) = \alpha_{1}^{T(1)}(\theta) = \alpha_{1}^{T}(\theta) = 0,
	\]
	and this is equivalent to $\theta$ being an eigenvalue of $T$ in the support of $1$. If $\theta$ has larger multiplicity in the graph, then all of its other eigenvectors are also eigenvectors of $T\setminus 1$ and will correspond to positive signs created in the next way. The second way is when a vertex $i$ finds a pole at $\theta$ because two or more of its children become $0$ at $\theta$. In this case, note that $\theta$ will become an eigenvalue of $T(j)$ in the support of $j$ for any $j$ child of $i$ that became 0, and so if $m$ children become $0$ (and positive right after), note that there will be an eigenspace of dimension $m-1$ for $\theta$ in $T$, all of which are $0$ everywhere outside of $T(i) \setminus i$.
\end{proof}

Note that the theorem could have been stated alternatively by saying that for $x = \theta$, the number of vertices $i$ so that $d_i(\theta) \geq 0$ is equal to the number of eigenvalues of $T$ in the interval $(-\infty,\theta]$, counted with multiplicity.

We assume now our tree $T$ has the following format:
\begin{figure}[H]
\begin{center}
	\includegraphics{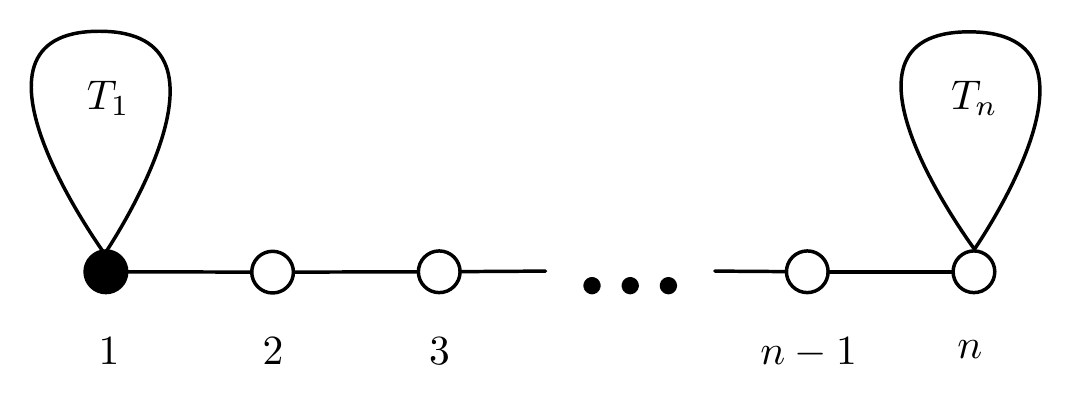}
	\caption{A tree that contains two vertices separated by a subdivided edge.} \label{fig:treepath}
\end{center}
\end{figure}

Assume $n \geq 8$. Our goal is to show that there are too many distinct eigenvalues in the interval $(-2,2)$. So many that at least one is not an integer. Assume $1$ is the root.

\begin{itemize}
    
    
    \item Make $\theta = 2$. If $i>1$, then
    	\[d_i(2) = 2-\frac{1}{d_{i+1}(2)},\] 
    So once there is a negative value or a zero on the path, all the remaining values going towards the root become positive or poles, as $2-1/x$ maps $[1,\infty]$ to itself. Therefore, the number of positive values or poles in the path is at least $n-2$ (at most one negative or one zero, and we cannot control what happens at vertex $1$).
    
    \item Make $\theta = -2$. If $i>1$, then
    	\[d_i(-2) = -2-\frac{1}{d_{i+1}(-2)},\] 
    So once there is a positive value or a zero on the path, all the remaining values going towards the root become negative or poles. Therefore, the number of positive values or zeros is at most $2$.
\end{itemize}

\begin{theorem}
	If a graph $G$ contains two vertices $1$ and $n$ for which there is a unique path of length at least $7$ between them, and all $n-2$ inner vertices of the path have degree $2$, then the graph has at least one eigenvalue that is not an integer.
\end{theorem}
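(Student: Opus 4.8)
The plan is to prove the stronger statement that $\alpha_1^G$ has at least four zeros in the open interval $(-2,2)$; by Lemma~\ref{lemma:derivative_quotient} these zeros are simple, so they are four \emph{distinct} eigenvalues of $G$ lying strictly between $-2$ and $2$, and since the only integers there are $-1,0,1$ this forces a non-integer eigenvalue. The argument never uses that $G$ is a tree — only the calculus of the rational functions $\alpha_i^G$.

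First I would make the structure explicit. Since the path $1,2,\dots,n$ is the only $1$--$n$ path and its inner vertices have degree $2$, deleting vertex $2$ (resp.\ $n-1$) splits off a rooted graph $G_1$ at $1$ (resp.\ $G_n$ at $n$), and $G$ is the rooted product of $P_n$ with $(G_1,K_1,\dots,K_1,G_n)$. Then Equation~\eqref{eq:alphasfraction} writes $\alpha_1^G$ as $\alpha_1^{G_1}$ minus a finite continued fraction whose $n-2\ge 6$ partial quotients (one per degree-$2$ inner vertex) all equal $x$ and whose innermost term is $\alpha_n^{G_n}$. I package this as $c_0=\alpha_n^{G_n}$, $c_k=x-1/c_{k-1}$ for $1\le k\le n-2$, and $c_{n-1}=\alpha_1^{G_1}-1/c_{n-2}=\alpha_1^G$. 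Each $c_k$ is the function $\alpha_j^H$ of an honest graph $H$ (a terminal piece of $G$, obtained by closing the partial path up with $G_n$, and at the last step with $G_1$), so by Lemma~\ref{lemma:derivative_quotient} it is strictly increasing and surjective on each branch with only simple zeros and poles. One further fact I need: every zero of $c_{k-1}$ inside $(-2,2)$ is a pole of $c_k$ — immediate from $c_k=x-1/c_{k-1}$ when $k\le n-2$, and for $k=n-1$ because at a common point the residues of $\alpha_1^{G_1}$ and of $-1/c_{n-2}$ have the same sign, so no cancellation occurs.

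Now the core count. Let $z_k$ be the number of zeros of $c_k$ in $(-2,2)$. From the branch structure — a complete branch between consecutive poles carries exactly one zero by surjectivity, and each of the two boundary branches carries a zero precisely when $c_k$ is negative at the left endpoint, respectively positive at the right endpoint (or has a pole there) — one gets
\[
z_k\;\ge\;z_{k-1}-1+\varepsilon_k^{-}+\varepsilon_k^{+},
\]
where $\varepsilon_k^{-}\in\{0,1\}$ is $1$ exactly when $c_k(-2)<0$ or $c_k$ has a pole at $-2$, and $\varepsilon_k^{+}$ is defined symmetrically at $2$. Telescoping, $z_{n-1}\ge z_0-(n-1)+A+B$ with $A=\sum_{k=1}^{n-1}\varepsilon_k^{-}$ and $B=\sum_{k=1}^{n-1}\varepsilon_k^{+}$. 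Next I would exploit the scalar dynamics down the path: the M\"obius map $y\mapsto 2-1/y$ carries $(-\infty,0]$ into $(1,\infty]$ and $(1,\infty]$ into itself, so among $c_1(2),\dots,c_{n-2}(2)$ at most one value is $\le 0$, and none is once $c_0(2)\le 0$; the map $y\mapsto -2-1/y$ gives the mirror statement at $-2$. Hence $A\ge n-3$, with $A\ge n-2$ whenever $c_0(-2)\ge 0$ or $c_0$ has a pole at $-2$; symmetrically $B\ge n-3$, with $B\ge n-2$ whenever $c_0(2)\le 0$ or $c_0$ has a pole at $2$. In the only remaining case, $c_0(-2)<0<c_0(2)$, the function $c_0=\alpha_n^{G_n}$ moves from negative to positive across $(-2,2)$ and, being increasing on its branches, must vanish there, so $z_0\ge 1$. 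In every case $z_0+A+B\ge 2n-5$, whence
\[
\#\{\text{zeros of }\alpha_1^G\text{ in }(-2,2)\}\;=\;z_{n-1}\;\ge\;(2n-5)-(n-1)\;=\;n-4\;\ge\;4,
\]
which is the contradiction with integrality.

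I expect the Sturm-type bookkeeping — the exact branch count, and the behaviour at $\pm2$ when one of them is itself an eigenvalue of $G$, hence a pole of some $c_k$ (handled cleanly by the convention that such a pole counts as a boundary zero on the adjacent branch) — to be the fiddly part. The genuinely delicate point is that the bound must already close at $n=8$, i.e.\ length exactly $7$: the plain telescoping only gives $z_{n-1}\ge n-5=3$, which is consistent with integrality, and it is precisely the observation that $\alpha_n^{G_n}$ must have a zero in $(-2,2)$ unless it already has a definite sign at one of $\pm2$ that recovers the fourth eigenvalue.
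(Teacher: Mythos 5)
Your proof is correct and is essentially the paper's own argument: both rest on the continued-fraction expansion of $\alpha_1^G$ along the path and on the observation that the M\"obius maps $y \mapsto \pm 2 - 1/y$ each have an absorbing interval ($(1,\infty]$ at $2$, $[-\infty,-1)$ at $-2$), so that at most one convergent misbehaves at each endpoint, forcing at least $n-4 \geq 4$ distinct eigenvalues of $G$ into $(-2,2)$. The only difference is bookkeeping: the paper fixes $\theta = \pm 2$ and counts signs of the $d_i$ via its Jacobs--Trevisan-style location theorem as $\theta$ sweeps the interval, while you fix the interval $(-2,2)$ and run the dual Sturm count of zeros of the convergents $c_k$ along the path; the two tallies coincide.
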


\begin{proof}
	We already did half the work. It remains to argue that in Figure~\ref{fig:treepath} the trees $T_1$ and $T_n$ can be arbitrary graphs instead, and that the sign count preceding the statement of the theorem is indeed counting new distinct eigenvalues.
	
	In the definition of $d_i$ in Equation~\eqref{eq:algorithm}, we could have considered that on each vertex $i$ of $T$ there is a rooted graph $G_i$, and make $d_i = x - \sum_{j} d_j^{-1} - (\alpha_{i}^{G_i})^{-1}$. Equation~\ref{eq:algorithmalpha} still holds.
	
	Now look at $G$, and number the vertices in the path between $1$ and $n$. Say $j \in \{2,...,n\}$, and assume that $d_j(\theta) = 0$ for some $\theta \in (-2,2)$. Then $d_{j-1}(\theta) = \infty$, and so, choosing $\epsilon$ sufficiently small, we have $\theta\pm\epsilon \in (-2,2)$; and $d_{j-1}(\theta-\epsilon) > 0$, $d_{j-1}(\theta+\epsilon) < 0$, $d_j(\theta-\epsilon) < 0$ and $d_j(\theta+\epsilon) > 0$. Thus the number of values $\geq 0$ within the path can only increase when the root becomes equal to $0$. According to our count, this must happen at least four times between in $(-2,2)$, and because $d_1(\theta) = \alpha_1^G(\theta)$, which has simple zeros corresponding to the eigenvalues of $G$ in the support of $1$, it follows that there are at least four distinct eigenvalues of $G$ in $(-2,2)$, so at least one of them is not an integer.
\end{proof}

As a final remark, recall that Schwenk showed in \cite{schwenk1973almost} that almost all trees contain any given limb, so by making this limb equal to $P_8$ (we may as well call it a tail), we are observing the unsurprising fact that, despite there being infinitely many integral trees of arbitrarily large diameters, almost all trees are not integral.

\section{Future research} \label{sec:problems}

This paper was heavily motivated by the conjecture in \cite{CoutinhoLiu2} that no tree on 4 or more vertices admits perfect state transfer. A forthcoming article by the same authors will address this conjecture.

Lippner, Kempton and Yau \cite{KemptonLippnerYauPotential} showed that if the graphs $G_i$ in Figure~\ref{fig:rooted}\footnote{This is the last time we refer back to this figure, so you can forget it now.} are weighted loops instead, then perfect state transfer does not occur between vertices $1$ and $n$. It would be really interesting to find a version our result that allows for the $G_i$ to be weighted, leading to a simultaneous generalization of their and our work in ruling out perfect state transfer, or to find a counter-example.

Integral trees are arguably one of the most studied classes of graphs in spectral graph theory. We have made humble progress towards a classification of this class, but we believe our methods have not yet been fully exploited. In particular, we have not tried much to find more examples of subtrees that sit between two vertices and force some eigenvalue of the tree to be non-integral.

\subsection*{Acknowledgements}

Authors acknowledge the funding from FAPEMIG that supported this research. Gabriel Coutinho acknowledges the support of CNPq.

\bibliographystyle{plain}
\IfFileExists{references.bib}
{\bibliography{references.bib}}
{\bibliography{../references}}

	
\end{document}